\newtheorem{theo}{Theorem}[section]
\newtheorem{lemma}[theo]{Lemma}
\newtheorem{defi}[theo]{Definition}
\newtheorem{coro}[theo]{Corollary}
\newtheorem{rem}[theo]{Remark}
\newtheorem{exam}[theo]{Example}
\newcommand\Mod{\operatorname{Mod}}
\newcommand\HMod{\operatorname{HMod}}
\newcommand\op{\operatorname{op}}
\newcommand\fp{\operatorname{fp}}
\newcommand\cfp{\operatorname{cfp}}
\newcommand\Id{\operatorname{Id}}
\newcommand\Ex{\operatorname{Ex}}
\newcommand\SSet{\operatorname{\bf SSet}}
\newcommand\Cat{\operatorname{\bf Cat}}
\newcommand\Ho{\operatorname{Ho}}
\newcommand\dom{\operatorname{dom}}
\newcommand\cod{\operatorname{cod}}
\newcommand\cc{\mathcal {C}}
\newcommand\cd{\mathcal {D}}
\newcommand\ch{\mathcal {H}}
\newcommand\ck{\mathcal {K}}
\newcommand\cl{\mathcal {L}}
\newcommand\cm{\mathcal {M}}
\newcommand\ct{\mathcal {T}}
\newcommand\cv{\mathcal {V}}
\date{May 9, 2013}
\begin{document}
\title[Rigidification of algebras]
{Rigidification of algebras over essentially algebraic theories}
\author[J. Rosick\'{y}]
{J. Rosick\'{y}$^*$}
\thanks{ $^*$ Supported by the Grant Agency of the Czech republic under grant 201/11/0528.} 
\address{
\newline 
Department of Mathematics and Statistics\newline
Masaryk University, Faculty of Sciences\newline
Kotl\'{a}\v{r}sk\'{a} 2, 611 37 Brno, Czech Republic\newline
rosicky@math.muni.cz
}
 
\begin{abstract}
Badzioch and Bergner proved a rigidification theorem saying that each homotopy simplicial algebra is weakly equivalent to a simplicial algebra. 
The question is whether this result can be extended from algebraic theories to finite limit theories and from simplicial sets to more general monoidal 
model categories. We will present some answers to this question.
\end{abstract}
\keywords{homotopy limit theory, homotopy model, rigidification}
\subjclass[2010]{18F99, 55U35}

\maketitle
 
\section{Introduction}
Badzioch \cite{Ba} proved a rigidification theorem for simplicial algebras of one-sorted algebraic theories $\ct$ saying that any homotopy
$\ct$-algebra is weakly equivalent to a (strict) $\ct$-algebra. Bergner \cite{Be} extended this rigidification theorem to (many-sorted) algebraic
theories. Our aim is to find whether their rigidification theorems can be generalized to an arbitrary finitely combinatorial monoidal model category 
$\cv$ in place of simplicial sets and to a finite weighted limit theory $\ct$ in place of an algebraic theory. These theories are usually called 
essentially algebraic (see \cite{AR}). In the homotopy context, we have to work with weighted limits whose weight is cofibrant (see \cite{LR}, 
or \cite{V}). Since, in contrast to finite products, finite weights are rarely cofibrant, we have to replace finite weights by their saturation 
consisting of finitely presentable weights. Then we can use finitely presentable cofibrant weights to define finite weighted homotopy limit theories.  

The rigidification theorem of \cite{Ba} and \cite{Be} has a strong form saying that the model categories of strict algebras and of homotopy algebras 
are Quillen equivalent. We will show that this strong form always fo\-llows from a weak one and is valid for $\ct$ having all limits weighted 
by a suitable class $\Phi$ of finitely presentable cofibrant weights. The condition is that any cofibrant weight can be obtained from $\Phi$-weights 
by means of homotopy invariant $\Phi$-flat colimits. In particular, we can take all finitely presentable cofibrant weights because, in a finitely 
combinatorial model category, any cofibrant object is a filtered colimit of finitely presentable cofibrant objects. Or, we can take all finite products, 
i.e., an algebraic theory, provided that any cofibrant weight is a homotopy sifted colimit of finite coproducts of representables. On the other hand, 
we will show that the rigidification theorem is not always true and that it means a kind of coherence statement.  
 
We will need some assumptions about $\cv$, above all it should be a monoidal model category in the sense of \cite{L}, i.e., with the cofibrant
unit $I$ (\cite{Ho} has this axiom in  a weaker form). This makes possible to define model $\cv$-categories (see \cite{L}). Also, $\cv$ should be 
locally finitely presentable as a closed category (see \cite{K}) and finitely combinatorial. The latter adds that both cofibrations and trivial 
cofibrations are cofibrantly generated by morphisms between finitely presentable objects. Since we need the projective $\cv$-model structure 
on $[\ct,\cv]$, the $\cv$-category $\ct$ should be locally cofibrant (i.e., it should have all hom-objects cofibrant), or $\cv$ should satisfy 
the monoid axiom. Since this projective model structure should be left proper as well, we will prefer the first assumption (see \cite{DRO}). In order 
to make the machinery of enriched left Bousfield localizations possible, we have to assume that $\cv$ is not only finitely combinatorial but finitely 
tractable, which means that the generating cofibrations and trivial cofibrations are between cofibrant objects (see \cite{B}). Even more restrictively, 
in order to prove our main results we have to assume that all objects of $\cv$ are cofibrant. Finally, will need a fibrant approximation $\cv$-functor 
on $\cv$ preserving limits weighted by finite weights. 

Concerning enriched category theory we refer to \cite{K1}. In particular, given a $\cv$-category $\ck$, a diagram $D:\cd\to\ck$ and a weight 
$G:\cd\to\cv$ then a limit $\{G,D\}$ of $D$ weighted by $G$ is defined by being equipped with a natural isomorphism
$$
\beta:\ck(-,\{G,D\})\to[\cd,\cv](G,\ck(-,D)).
$$
This natural transformation corresponds to a weighted limit cone
$$
\delta:G\to\ck(\{G,D\},D).
$$

The author is indebted to John Bourke, Richard Garner, A. E. Stanculescu and Luk\' a\v s Vok\v r\'{i}nek for stimulating discussions about the subject 
of this paper. But, in particular, the author is grateful to the unknown referee for finding a gap in the proof of \ref{th3.3} and for pointing up
the need of taking the saturation in \ref{re3.5}.

\section{Homotopy limit sketches}
We recall the concept of a weighted limit sketch (see \cite{K1}).

\begin{defi}\label{def2.1}
{\em
A \textit{weighted limit sketch} is a pair $\ch=(\ct,L)$ consisting of a small $\cv$-category $\ct$ and a set $L$ 
of weights $G_l:\cd_l\to\cv$, diagrams $D_l:\cd_l\to\ct$, objects $X_l$ and morphisms
$$
\delta_l:G_l\to\ct(X_l,D_l)
$$
in $[\ct,\cv]$ for each $l\in L$.

A \textit{model} of $\ch$ is a $\cv$-functor $A:\ct\to\cv$ such that  
$$
G_l \xrightarrow{\quad  \delta_l\quad} \ct(X_l,D_l) \xrightarrow{\quad \quad} \cv(AX_l,AD_l)
$$
is a weighted limit cone for each $l\in L$. 
}
\end{defi}

The last statement means that the induced morphisms
$$
t_l^A:AX_l\to\{G_l,AD_l\}
$$
are isomorphisms. We will denote by $\Mod(\ch)$ the full subcategory of $[\ct,\cv]$ consisting of all models of $\ch$.

A weight $G:\cd\to\cv$ is called \textit{finite} if 
\begin{enumerate}
\item[(i)]  $\cd$ has finitely many objects,
\item[(ii)]  all objects $\cd(d,e)$ are finitely presentable, and
\item[(iii)]  all objects $Gd$ are finitely presentable.
\end{enumerate}
This concept was introduced in \cite{K}. Since any finitely presentable weight belongs to the closure of representable functors under
colimits weighted by finite weights (see \cite{K}, 7.2), finitely presentable weights form the saturation of finite weights (see \cite{KS},
3.8 and 3.13). 

\begin{defi}\label{def2.2}
{\em
A weighted limit sketch is called \textit{finite} if all weights $G_l$, $l\in L$ are finitely presentable.
}
\end{defi}

\cite{K} calls a weighted limit sketch finite if all weights are finite. Our definition is more general and we will need it later. But
its strength is the same as that of \cite{K}.  

A \textit{fibrant approximation functor} $R:\cv\to\cv$ is a functor $R$ together with a natural transformation $\rho:\Id\to R$ such that
$\rho_V$ is a weak equivalence and $RV$ is fibrant for each $V\in\cv$ (cf. \cite{H}). If all $\rho_V$ are trivial cofibrations we will call
$R$ a \textit{fibrant replacement functor} (cf. \cite{Ho}).

\begin{theo}\label{th2.3}
Let $\cv$ be a combinatorial monoidal model category equipped with a fibrant approximation $\cv$-functor $R:\cv\to\cv$ preserving finite weighted 
limits. Let $\cl=(\ct,L)$ be a finite weighted limit sketch with $\ct$ locally cofibrant. Then $\Mod(\ch)$ is a combinatorial model $\cv$-category 
with respect to the projective model structure. 
\end{theo}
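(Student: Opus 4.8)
The plan is to exhibit $\Mod(\ch)$ as a model structure transferred along a reflection onto the projective model category $[\ct,\cv]$, and to verify the single nontrivial axiom by Quillen's path-object argument, into which the fibrant approximation functor $R$ is fed. First I would record that $[\ct,\cv]$ carries the projective model $\cv$-structure: since $\cv$ is combinatorial and $\ct$ is locally cofibrant, the objectwise weak equivalences and fibrations determine a combinatorial model $\cv$-category $[\ct,\cv]$, cofibrantly generated by sets $I$ and $J$ (this is exactly the point cited from \cite{DRO}). Next I would show that the full subcategory $\Mod(\ch)$ is reflective and accessibly embedded. It is closed under all limits in $[\ct,\cv]$, because weighted limits commute with limits and with evaluation. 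It is also closed under filtered colimits: each $G_l$ is a finitely presentable object of $[\cd_l,\cv]$, so $\{G_l,-\}$ commutes with filtered colimits, while evaluation at the objects of $\ct$ preserves all colimits; hence the defining isomorphisms $t^A_l$ are stable under filtered colimits. Being an accessible, limit-closed full subcategory of the locally presentable category $[\ct,\cv]$, $\Mod(\ch)$ is itself locally presentable and the inclusion $E:\Mod(\ch)\to[\ct,\cv]$ admits a left adjoint reflector $L$.

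I would then transfer the structure along $L\dashv E$: a morphism of $\Mod(\ch)$ is declared a weak equivalence or fibration exactly when $E$ carries it to one, with candidate generating sets $L(I)$ and $L(J)$. The small-object argument is available throughout by combinatoriality, so the only genuine issue is the acyclicity condition, that every relative $L(J)$-cell complex is a weak equivalence. This is where I expect the \textbf{main obstacle}: colimits in the reflective subcategory are computed by applying $L$, so these cell complexes are \emph{not} literally objectwise trivial cofibrations and a direct verification fails. I would resolve it with Quillen's path-object argument, which reduces acyclicity to two ingredients internal to $\Mod(\ch)$, namely a functorial fibrant replacement and functorial path objects for fibrant objects.

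For the fibrant replacement I would use postcomposition with $R$. Since finitely presentable weights lie in the saturation of finite weights, preservation of finite weighted limits by $R$ entails preservation of all finitely presentable weighted limits; hence $R\circ A$ is a model whenever $A$ is, while $\rho$ furnishes an objectwise weak equivalence $A\to R\circ A$ onto an objectwise-fibrant, hence fibrant, model. Thus $R$ restricts to a fibrant replacement functor on $\Mod(\ch)$. For path objects I would cotensor with a cofibrant cylinder $H$ on the unit $I$ of $\cv$, factoring the codiagonal as $I\amalg I\rightarrowtail H\xrightarrow{\;\sim\;}I$. Cotensors commute with weighted limits, so $\Mod(\ch)$ is closed under cotensoring, and the pushout-product axiom in the model $\cv$-category $[\ct,\cv]$ makes $A\to A^{H}\to A^{I\amalg I}=A\times A$ a functorial path object for fibrant $A$ that lies in $\Mod(\ch)$.

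These two ingredients verify acyclicity, so the transferred projective model structure exists on $\Mod(\ch)$; it is cofibrantly generated by $L(I)$ and $L(J)$ and, since $\Mod(\ch)$ is locally presentable, combinatorial. Finally the $\cv$-enrichment descends from $[\ct,\cv]$: because $L\dashv E$ is a $\cv$-adjunction and $E$ creates cotensors, the pushout-product axiom transports to $\Mod(\ch)$, making it a model $\cv$-category. I would expect the verifications of paragraph one (closure under filtered colimits, accessibility of the reflector) and paragraph three (the explicit path object via $H$) to be routine, with all the weight of the proof concentrated in the acyclicity step and its dependence on the hypothesis that $R$ preserves finite weighted limits.
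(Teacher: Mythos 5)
Your proposal is correct and follows essentially the same route as the paper: the paper establishes reflectivity of $\Mod(\ch)$ in $[\ct,\cv]$ (citing Kelly), observes exactly as you do that $R$ preserves finitely presentable weighted limits because such weights lie in the saturation of the finite ones and hence lifts to a fibrant replacement on models, and then invokes Schwede's Lemma B.2 — which is precisely the transfer-along-an-adjunction lemma via Quillen's path-object argument that you spell out by hand. The only difference is one of packaging: you reconstruct the contents of the cited lemma (accessibility of the reflector, the cotensor path object, acyclicity) explicitly, where the paper outsources them.
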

\begin{proof}
Following \cite{K}, $\Mod(\cl)$ is a reflective subcategory of $[\ct,\cv]$. We will denote the inclusion $\cv$-functor by
$U:\Mod(\ch)\to\ [\ct,\cv]$ and its left $\cv$-adjoint by $F$. Since any finitely presentable weight belongs to the closure of representable 
functors under colimits weighted by finite weights, the fibrant approximation functor $R$ preserves limits weighted by finitely presentable
weights. Thus it lifts to a ``fibrant approximation" functor on $\Mod(\cl)$ by sending $A$ to $RA$. Hence the result follows from \cite{S} B.2.
\end{proof}

\begin{rem}\label{re2.4}
{
\em
(1) The assumption that $\ct$ is \textit{locally cofibrant} (i.e. that it has all hom-objects cofibrant) is only needed for the existence 
of the projective model structure on $[\ct,\cv]$. Thus it can replaced by assuming that $\cv$ satisfes the monoid axiom.
 
(2) Each $\cv$ having all objects fibrant has $R=\Id$. In $\SSet$, one can take $R=\Ex^{\infty}$ because it is a colimit of a countable
chain of right adjoint functors (see \cite{GJ}) and filtered colimits commute with finite weighted limits in $\cv$ (see \cite{K}). 
Following \cite{J} B2.1.4. and B2.1.6, $R$ is a simplicial functor.

(3) We could replace a finite weighted limit sketch by an ($\alpha$-small) weighted limit sketch but we should assume that $R$ preserves
($\alpha$-small) weighted limits (see \cite{K}, 7.4).
}
\end{rem} 

\begin{defi}\label{def2.5}
{\em
A \textit{weighted homotopy limit sketch} is a weighted limit sketch $\ch=(\ct,L)$ where all weights $G_l$ are cofibrant in $[\ct,\cv]$. 
 
A \textit{homotopy model} of $\ch$ is a $\cv$-functor $A:\ct\to\cv$ such that the induced morphisms
$$
t_l^A:AX_l\to\{G,AD_l\}
$$
are weak equivalences for each $l\in L$. 
}
\end{defi}

Let us add that $\{G,AD_l\}$ is the weighted homotopy limit in the sense of \cite{V} provided that the diagrams $AD_l$ are pointwise cofibrant
for each $l\in L$, i.e., that all $AD_ld$, $d\in\cd$, $l\in L$ are cofibrant.

We will denote by $\HMod(\ch)$ the full subcategory of $[\ct,\cv]$ consisting of all homotopy models of $\ch$. Of course, 
any model of $\ch$ is a homotopy model of $\ch$. We say that a homotopy model is fibrant if it is fibrant in the projective model
structure on $[\ct,\cv]$.
 
\begin{exam}\label{ex2.6}
{
\em
Let $\ct$ be a small $\cv$-category and $f:X\to Y$ a morphism in $\ct$. Let $\cd$ be a free $\cv$-category over $1$. Thus $\cd$ 
has a unique object $d$ with $\cd(d,d)$ equal to the tensor unit $I$ of $\cv$. Let $\ch=(\ct,L)$ be a weighted limit sketch where $L$ 
consists of a single weight $G:\cd\to\cv$ with $Gd=I$, a single diagram $D:\cd\to\ct$ with $Dd=B$ and a morphism $\delta:I\to\ct(X,Y)$ 
corresponding to $f$. Models of $\ch$ are $\cv$-functors $A:\ct\to\cv$ such that $A(f)$ is an isomorphism. The weight $G$ is cofibrant 
and $A$ is a homotopy model of $\ch$ if and only if it is fibrant and $A(f)$ is a weak equivalence. 
}
\end{exam}

\begin{theo}\label{th2.7}
Let $\cv$ be a left proper tractable monoidal model category and $\ch=(\ct,L)$ a weighted homotopy limit sketch with $\ct$ locally cofibrant. 
Then there is a lo\-ca\-li\-zed model category structure $\cm_\ch$ on $[\ct,\cv]$ whose fibrant objects are precisely fibrant homotopy models of $\ch$.
\end{theo}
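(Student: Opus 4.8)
The plan is to obtain $\cm_\ch$ as an enriched left Bousfield localization of the projective model structure on $[\ct,\cv]$ at a set $S$ of maps between cofibrant objects, chosen so that the $S$-local fibrant objects are exactly the fibrant homotopy models. First I would record that the projective model $\cv$-structure on $[\ct,\cv]$ exists and is left proper and tractable. Its existence and left properness use that $\ct$ is locally cofibrant (cf. \cite{DRO}); tractability is inherited from $\cv$, since the generating (trivial) cofibrations of the projective structure have the form $F_T(i)$, where $i$ ranges over the generating (trivial) cofibrations of $\cv$ and $F_T$ is the left $\cv$-adjoint to evaluation at $T\in\ct$. As $\cv$ is tractable these $i$ have cofibrant domains and codomains, and $F_T$ is left Quillen, so the same holds after applying $F_T$. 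This places us in the setting of \cite{B}, and it remains only to produce $S$.

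Next I would build $S=\{s_l\mid l\in L\}$ from the sketch data. For each $l$ the enriched Yoneda lemma gives $[\ct,\cv](\ct(D_l-,-),A)\cong AD_l$ naturally in $A$, so that restriction $D_l^\ast:[\ct,\cv]\to[\cd_l,\cv]$, $A\mapsto AD_l$, has left adjoint the left Kan extension $(D_l)_!G=G\star\ct(D_l-,-)$, and moreover
$$
[\ct,\cv]\bigl(G\star\ct(D_l-,-),A\bigr)\cong\{G,AD_l\}.
$$
Since $\ct(X_l,D_l-)=D_l^\ast\,\ct(X_l,-)$, the cone $\delta_l:G_l\to\ct(X_l,D_l)$ in $[\cd_l,\cv]$ transposes across this adjunction to a morphism
$$
s_l:G_l\star\ct(D_l-,-)\longrightarrow\ct(X_l,-)
$$
in $[\ct,\cv]$. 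Both endpoints are cofibrant: the representable $\ct(X_l,-)=F_{X_l}(I)$ is cofibrant because $I$ is cofibrant and $F_{X_l}$ is left Quillen, while $D_l^\ast$ is right Quillen (it preserves pointwise fibrations and trivial fibrations), so its left adjoint $(D_l)_!$ is left Quillen and sends the cofibrant weight $G_l$ to a cofibrant object. Factoring each $s_l$ as a cofibration followed by a trivial fibration and keeping the cofibration part, $S$ becomes a set of cofibrations between cofibrant objects.

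I would then let $\cm_\ch$ be the enriched left Bousfield localization of $[\ct,\cv]$ at $S$, which exists by \cite{B}. Its fibrant objects are the $S$-local projectively fibrant objects, that is, the projectively fibrant $A$ for which $[\ct,\cv](s_l,A)$ is a weak equivalence in $\cv$ for every $l$. Because $s_l$ has cofibrant domain and codomain and $A$ is fibrant, this enriched hom computes the derived mapping object. Applying $[\ct,\cv](-,A)$ to $s_l$ and using the two adjunction isomorphisms of the previous paragraph turns it into the canonical comparison
$$
t_l^A:AX_l\longrightarrow\{G_l,AD_l\},
$$
so $A$ is $S$-local precisely when every $t_l^A$ is a weak equivalence, i.e. when $A$ is a homotopy model of $\ch$. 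Hence the fibrant objects of $\cm_\ch$ are exactly the fibrant homotopy models.

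The main obstacle I expect is the homotopical bookkeeping in this last identification. One must confirm that $[\ct,\cv](s_l,A)$ genuinely models the derived mapping object, so that being an honest weak equivalence is the correct local condition; this is exactly where cofibrancy of the endpoints of $s_l$ and fibrancy of $A$ enter, and it is the reason the tractability of the projective structure and the left Quillen property of $(D_l)_!$ were arranged above. One must also verify that the transpose of $\delta_l$ really induces $t_l^A$ under $[\ct,\cv](G_l\star\ct(D_l-,-),A)\cong\{G_l,AD_l\}$, rather than some other comparison map. Once these naturality checks are in place, the remaining statements are formal consequences of \cite{B}.
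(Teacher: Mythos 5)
Your proposal is correct and follows essentially the same route as the paper: your maps $s_l$ are exactly the paper's $\varphi_l$ (the transposes of the cones $\delta_l$, with cofibrant domain and codomain), the localized structure is Barwick's enriched left Bousfield localization at this set, and the identification of the local fibrant objects with fibrant homotopy models via $[\ct,\cv](\ct(X_l,-),A)\cong AX_l$ and $[\ct,\cv](G_l\ast\ct(D_l,-),A)\cong\{G_l,AD_l\}$ is the paper's argument. The extra bookkeeping you supply (tractability of the projective structure, cofibrant replacement of the $s_l$) only makes explicit what the paper leaves to the references.
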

\begin{proof}
In $[\ct,\cv]$, we have morphisms
$$
\varphi_l:G_l\ast\ct(D_l,-)\to\ct(X_l,-)
$$
for each $l\in L$. Since hom-functors are always cofibrant in $[\ct,\cv]$, $G_l\ast\ct(D_l,-)$ is cofibrant as well (see \cite{LR} 4.1).
Thus $\varphi_l$ is a morphism between cofibrant objects for each $l\in L$. Following \cite{B}, 3.18., there exists a left Bousfield
$\cv$-localization $\cm_\ch$ of $[\ct,\cv]$ with respect to the set $\{\varphi_l\backslash l\in L\}$. Fibrant objects in $\cm_\ch$ are
fibrant objects in $[\ct,\cv]$ for which
$$
[\ct,\cv](\varphi_l,A):[\ct,\cv](\ct(X_l,-),A)\to[\ct,\cv](G_l\ast\ct(D_l,-),A)
$$
is a weak equivalence for each $l\in L$. Since 
$$
[\ct,\cv](\ct(X_l,-),A)\cong A(X_l)
$$ 
and 
$$
[\ct,\cv](G_l\ast\ct(D_l,-),A)\cong\{G_l,\ct(D_l,A)\},
$$ 
$[\ct,\cv](\varphi_l,A)$ corresponds to the morphism $t_l^A$. Thus a fibrant object $A$ in $[\ct,\cv]$ is fibrant in $\cm_\ch$ if and only if $A$ 
is a homotopy model of $\ch$.  
\end{proof}
 
\begin{lemma}\label{le2.8}
Let $\cv$ be a combinatorial monoidal model ca\-te\-go\-ry having all objects cofibrant and $\ch=(\ct,L)$ a weighted homotopy limit sketch. 
Let $G:\cd^{\op}\to\cv$ be a cofibrant weight, $D_1,D_2:\cd\to[\ct,\cv]$ diagrams and $\alpha:D_1\to D_2$ such that $\alpha_d$ is a weak equivalence
in $\cm_\ch$ for each object $d$ in $\cd$. Then $G\ast\alpha:G\ast D_1\to G\ast D_2$ is a weak equivalence in $\cm_\ch$. 
\end{lemma}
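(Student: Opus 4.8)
The plan is to detect weak equivalences in $\cm_\ch$ by mapping into fibrant objects and then, through the defining adjunction for weighted colimits, to turn the problem into a question of homotopy invariance of the weighted limit $\{G,-\}$ for the cofibrant weight $G$.

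First I would reduce to the case where $D_1$ and $D_2$ are projectively cofibrant. Here the hypothesis that every object of $\cv$ is cofibrant is decisive. Colimits in $[\ct,\cv]$ are computed objectwise, so for each $t$ one has $(G\ast D_i)(t)=G\ast\bigl(D_i(-)(t)\bigr)$, a weighted colimit of a diagram $\cd\to\cv$ all of whose values are cofibrant; since $G$ is cofibrant, $G\ast-\colon[\cd,\cv]\to\cv$ preserves objectwise weak equivalences. Applying a projective cofibrant replacement $p_i\colon QD_i\to D_i$ and working one $t$ at a time, $G\ast p_i$ is a projective, hence an $\cm_\ch$-, weak equivalence. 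Lifting $\alpha$ to $Q\alpha\colon QD_1\to QD_2$ and using two-out-of-three, it is enough to treat $Q\alpha$, so I may assume $D_1,D_2$ projectively cofibrant.

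Next, fix a fibrant object $W$ of $\cm_\ch$. Since $G\ast-$ is left Quillen for the cofibrant weight $G$ (the weighted colimit being a left Quillen bifunctor, cf. \cite{LR} 4.1), each $G\ast D_i$ is cofibrant, so $[\ct,\cv](G\ast D_i,W)$ is the derived mapping object in $\cv$. The defining isomorphism for weighted colimits gives
$$
[\ct,\cv](G\ast D_i,W)\cong\{G,[\ct,\cv](D_i,W)\},
$$
where $[\ct,\cv](D_i,W)\colon\cd^{\op}\to\cv$ sends $d$ to $[\ct,\cv](D_id,W)$; as $D_id$ is cofibrant and $W$ fibrant, this is again a derived mapping object, and the resulting weight is objectwise fibrant. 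Because each $\alpha_d$ is a weak equivalence in $\cm_\ch$ and $W$ is $\cm_\ch$-fibrant, the characterisation of local equivalences (\cite{B}) makes $[\ct,\cv](\alpha_d,W)$ a weak equivalence for every $d$; and since $G$ is cofibrant, $\{G,-\}$ is homotopy invariant on objectwise-fibrant weights (\cite{V}). Combining these with the displayed isomorphism shows that $[\ct,\cv](G\ast D_2,W)\to[\ct,\cv](G\ast D_1,W)$ is a weak equivalence for every fibrant $W$, which is exactly the assertion that $G\ast\alpha$ is a weak equivalence in $\cm_\ch$.

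The step I expect to be most delicate is the cofibrancy bookkeeping that makes the strict adjunction compute derived mapping objects on both sides: this is what forces the reduction to projectively cofibrant $D_i$ and the systematic use of the all-objects-cofibrant hypothesis. One must also note that the derived mapping object into an $\cm_\ch$-fibrant object agrees with the one computed in the unlocalised projective structure, since left Bousfield localisation preserves cofibrations and does not alter function complexes into local objects.
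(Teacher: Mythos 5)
Your proof is correct. The first half --- replacing $D_1,D_2$ by objectwise cofibrant diagrams and checking, one object of $\ct$ at a time and using that all objects of $\cv$ are cofibrant, that $G\ast p_i$ is a projective weak equivalence --- is exactly what the paper does with its cofibrant replacement $\cv$-functor $Q$ on $[\ct,\cv]$ and the diagrams $D_X$. Where you diverge is in the second half. The paper finishes by citing that a left Bousfield $\cv$-localization of the model $\cv$-category $[\ct,\cv]$ is again a model $\cv$-category (\cite{B}, 4.46) and then applying the enriched analogue of \cite{H} 18.4.4 \emph{inside} $\cm_\ch$: a cofibrant weight applied to an objectwise weak equivalence between objectwise cofibrant diagrams yields a weak equivalence. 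You instead unwind what that conclusion means: you test $G\ast Q\alpha$ against an arbitrary local object $W$, transpose through the defining isomorphism $[\ct,\cv](G\ast D,W)\cong\{G,[\ct,\cv](D,W)\}$, and invoke the dual homotopy-invariance statement for $\{G,-\}$ on objectwise fibrant diagrams in $\cv$. This is somewhat more self-contained, replacing the appeal to Barwick's 4.46 by the very definition of local equivalences via enriched mapping objects into local objects (also from \cite{B}); the price is the cofibrancy bookkeeping you flag yourself, which you handle correctly --- $G\ast D_i$ must be cofibrant for the strict hom into $W$ to compute the derived one, and this holds because $G$ is cofibrant and the $D_i$ are objectwise cofibrant. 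Both routes rest on the same two inputs from the theory of enriched localizations, so neither is materially shorter, but yours makes visible the mechanism that the paper's citation hides.
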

\begin{proof}
Since $[\ct,\cv]$ is a model $\cv$-category, there is a cofibrant replacement $\cv$-functor $Q:[\ct,\cv]\to[\ct,\cv]$ (see \cite{Sh} 24.2). Let
$\gamma_A:QA\to A$ denote the corresponding trivial fibration for $A\in[\ct,\cv]$. Consider the diagram
$$
\xymatrix@C=4pc@R=3pc{
G\ast QD_1 \ar[r]^{G\ast Q\alpha} \ar[d]_{G\ast\gamma D_1} &
G\ast QD_2\ar [d]^{G\ast\gamma D_2}\\
G\ast D_1 \ar[r]_{G\ast\alpha}& G\ast D_2
}
$$
For $D:\cd\to [\ct,\cv]$ and $X\in\ct$, let $D_X:\cd\to\cv$ be defined by $D_Xd=Dd(X)$. Since $(\gamma D)_X:D_X\to (QD)_X$ is an objectwise
weak equivalence between objectwise cofibrant diagrams, $G\ast(\gamma D)_X$ is a weak equivalence in $\cv$. This is \cite{H} 18.4.4, which is clearly
valid for model $\cv$-categories. Thus $G\ast\gamma D$ is a weak equivalence in $[\ct,\cv]$. Therefore $G\ast\alpha$ is a weak equivalence in $\cm_\ch$
if and only if $G\ast Q\alpha$ is a weak equivalence in $\cm_\ch$. Since a left Bousfield localization of a model $\cv$-category $[\ct,\cv]$
is a model $\cv$-category (see \cite{B}, 4.46) and $Q\alpha$ is an objectwise weak equivalence in $\cm_\ch$ between objectwise cofibrant diagrams,
$G\ast Q\alpha$ is a weak equivalence in $\cm_\ch$ (cf. \cite{H}, 18.4.4).
\end{proof}

\section{Homotopy limit theories}

\begin{defi}\label{def3.1}
{\em
A weighted limit sketch $\ch=(\ct,L)$ will be called \textit{normal} if $X_l=\{G_l,D_l\}$ and 
$$
\delta_l:G_l\to\ct(X_l,D_l)
$$
is the weighted limit cone.
}
\end{defi}

\begin{rem}\label{re3.2}
{
\em
(1) Let $\Phi$ be a class of finitely presentable cofibrant weights and $\ct$ be a small $\cv$-category having all limits weighted by weights belonging
to $\Phi$. We get a normal finite weighted homotopy limit sketch $\ch(\ct)=(\ct,L(\ct))$ where $L(\ct)$ consists of all pairs $(G,D)$ where 
$G:\cd\to\cv$ belongs to $\Phi$ and $D:\cd\to\ct$ is a diagram. These sketches will be called $\Phi$-\textit{weighted homotopy limit theories}. 
If $\Phi$ consists of all finitely presentable cofibrant weights, we say that $\ch(\ct)$ is a \textit{finite weighted homotopy limit theory}. 
Very often we will denote these theories just by $\ct$.  

For a pair of weights $H:\cc^{\op}\to\cv$ and $G:\cd^{\op}\to\cv$, we say that $H$-\textit{colimits commute with} $G$-\textit{limits} if the functor
$$
H\ast -:[\cc,\cv]\to\cv
$$
preserves limits weighted by $G$. The class of colimits commuting with all $\Phi$-weighted limits is denoted $\Phi^+$. Weights belonging to $\Phi^+$ 
are called $\Phi$-\textit{flat}.

(2) A weight $G:\cd^{\op}\to\cv$ will be called \textit{homotopy invariant} if, for its cofibrant replacement $\gamma_G:G_c\to G$ in the projective model 
structure and any objectwise cofibrant diagram $D:\cd\to\cv$ (i.e., $Dd$ is cofibrant for each object $d$ in $\cd$), the morphism  
$\gamma_G\ast D: G_c\ast D\to G\ast D$ is a weak equivalence.  
  
Following \cite{H} 18.4.5 (1) this definition does not depend on the choice of a cofibrant replacement. In particular, any cofibrant weight
is homotopy invariant. 

(3) Given a class $\Phi$ of cofibrant weights, then $\Phi^\diamond$ will denote the closure in cofibrant weights of $\Phi$ under colimits weighted 
by $\Phi$-flat homotopy invariant weights. This means that $\Phi^\diamond$ arises from $\Phi$ by iterative taking weighted colimits in presheaves
$G\ast D$ such that $G$ is $\Phi$-flat and homotopy invariant, $D$ is objectwise cofibrant and $G\ast D$ is cofibrant. 

Whenever $G$ is cofibrant and $D$ objectwise cofibrant then $G\ast D$ is cofibrant.

(4) Often, we will have to assume that all objects of $\cv$ are cofibrant. Then all diagrams $D:\cd\to\cv$ are objectwise cofibrant, 
which simplifies the definition of a homotopy invariant weight. Also, every $\cv$-category is locally cofibrant.
}
\end{rem}
 
\begin{theo}\label{th3.3}
Let $\cv$ be a finitely combinatorial monoidal model ca\-te\-go\-ry having all objects cofibrant and $\Phi$ a class of finitely presentable cofibrant 
weights such that every cofibrant weight is weakly equivalent to a weight belonging to $\Phi^\diamond$. Assume that $\cv$ is equipped with a fibrant 
approximation $\cv$-functor $R:\cv\to\cv$ preserving $\Phi$-weighted limits. Let $\ct$ be a $\Phi$-weighted homotopy limit theory. 
Then the model categories $\Mod(\ch(\ct))$ and $\cm_{\ch(\ct)}$ are Quillen equivalent.
\end{theo}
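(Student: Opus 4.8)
The plan is to realize the reflection adjunction $F\dashv U$ of the proof of \ref{th2.3} as a Quillen adjunction between $\cm_{\ch(\ct)}$ and $\Mod(\ch(\ct))$, and then to reduce the whole statement, via a standard criterion, to the single assertion that its derived unit is a weak equivalence on cofibrant objects. First I would check that $(F,U)$ is a Quillen adjunction. Weak equivalences and fibrations of the projective structure on $\Mod(\ch(\ct))$ are created objectwise, so $U$ carries them to objectwise weak equivalences and fibrations; hence $(F,U)$ is a Quillen adjunction for the projective structure on $[\ct,\cv]$. By the universal property of the left Bousfield localization (see \cite{B}; the maps $\varphi_l\colon G_l\ast\ct(D_l,-)\to\ct(X_l,-)$ are between cofibrant objects, as in the proof of \ref{th2.7}) it descends to $\cm_{\ch(\ct)}$ as soon as $F$ sends each $\varphi_l$ to a weak equivalence. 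In fact $F\varphi_l$ is an \emph{isomorphism}: for every model $A$ one has $\Mod(\ch(\ct))(F\varphi_l,A)\cong[\ct,\cv](\varphi_l,A)$, which by the identifications in the proof of \ref{th2.7} is $t_l^A$, an isomorphism since $A$ is a strict model; enriched Yoneda in the reflective subcategory gives the claim. The same normality $X_l=\{G_l,D_l\}$ shows each representable $\ct(X,-)$ is a strict model.

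Next I would observe that $U$ reflects weak equivalences between fibrant objects: a fibrant object of $\Mod(\ch(\ct))$ is an objectwise fibrant strict model, hence fibrant in $\cm_{\ch(\ct)}$, and between two such objects a map is a weak equivalence of $\cm_{\ch(\ct)}$ exactly when it is objectwise, i.e. exactly when it is a weak equivalence of $\Mod(\ch(\ct))$. By the standard criterion for Quillen equivalences (see \cite{Ho}) it then suffices to prove that for every cofibrant $c$ in $\cm_{\ch(\ct)}$ the unit $\eta_c\colon c\to UFc$ is a weak equivalence in $\cm_{\ch(\ct)}$; since $U$ preserves all weak equivalences, the fibrant replacement in $\Mod(\ch(\ct))$ may be omitted. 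This is exactly the rigidification statement: it asserts that every homotopy model is locally weakly equivalent to a strict one.

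The heart is to establish $\eta_c$ a local weak equivalence for all cofibrant $c$, and I would do this by induction along the construction of $\Phi^\diamond$. For the base, $\eta_c$ is an isomorphism when $c=\ct(X,-)$ is representable (its unit is the identity of a model), and for $c=G\ast\ct(D,-)$ with $G\in\Phi$ the factorization $\varphi_l=U(F\varphi_l)\circ\eta_c$ with $F\varphi_l$ invertible exhibits $\eta_c$ as an isomorphism composed with the localizing map $\varphi_l$, hence as a local weak equivalence. For the inductive step, let $c=H\ast c_\bullet$ be a colimit weighted by a $\Phi$-flat homotopy invariant weight $H\colon\ce^{\op}\to\cv$ of a diagram $c_\bullet\colon\ce\to[\ct,\cv]$ whose values already satisfy the property. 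Because $H$ is $\Phi$-flat, $H$-weighted colimits commute with the $\Phi$-weighted limits defining models, so the colimit $H\ast UFc_\bullet$ formed in $[\ct,\cv]$ is again a strict model and therefore computes $Fc$; thus $UFc\cong H\ast UFc_\bullet$, and by naturality of the unit $\eta_c$ is identified with $H\ast\eta_{c_\bullet}$. Choosing a cofibrant replacement $H_c\to H$ of the weight, \ref{le2.8} shows $H_c\ast\eta_{c_\bullet}$ is a local weak equivalence (each $\eta_{c_e}$ is one, and all objects of $\cv$ are cofibrant), while homotopy invariance of $H$ (see \ref{re3.2}) makes $H_c\ast c_\bullet\to H\ast c_\bullet$ and $H_c\ast UFc_\bullet\to H\ast UFc_\bullet$ weak equivalences; two-out-of-three then yields $\eta_c=H\ast\eta_{c_\bullet}$ a local weak equivalence.

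Finally, for an arbitrary cofibrant $c$, density of the representables lets one write $c$ as a weighted colimit of representables, and the hypothesis that the corresponding cofibrant weight is weakly equivalent to one in $\Phi^\diamond$ then yields a (zig-zag of) weak equivalence(s) between cofibrant objects from $c$ to a presheaf built from the representables and the $G\ast\ct(D,-)$, $G\in\Phi$, by homotopy invariant $\Phi$-flat weighted colimits, i.e. to one covered by the induction. Since $F$ is left Quillen it preserves weak equivalences between cofibrant objects and $U$ preserves all weak equivalences, so a last two-out-of-three transports $\eta$ being a local weak equivalence back to $c$. I expect the main obstacle to be precisely the inductive step: verifying that $\Phi$-flatness forces $U$ to preserve the weighted colimit (so that $UF$ commutes with it) and that the units assemble coherently as $H\ast\eta_{c_\bullet}$, together with the substitution-of-weights bookkeeping needed to pass from the hypothesis on abstract weights to presheaves of representables; once these are in place, \ref{le2.8} and homotopy invariance do the remaining work.
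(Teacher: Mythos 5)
Your proposal is correct and follows essentially the same route as the paper's proof: the same Quillen pair $(U,F)$, reduction via Hovey's criterion to the derived unit on cofibrant objects, the base case identifying $\eta$ on $\Phi$-objects with a localizing map $\varphi_l$, and the inductive step over $\Phi^\diamond$ using $\Phi$-flatness to commute $U$ past the weighted colimit, Lemma \ref{le2.8} for the cofibrant replacement of the weight, and homotopy invariance plus two-out-of-three. Your single comparison square merely packages the paper's two squares (and its comparison map $k$) into one step, which is a cosmetic, not substantive, difference.
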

\begin{proof}
Let $\ch=\ch(\ct)$. The $\cv$-functor $F:[\ct,\cv]\to\Mod(\ch)$ is left Quillen (see the proof of \ref{th2.3}). Since all $\varphi_l$ from the proof 
of \ref{th2.7} are morphisms between cofibrant objects and $F\varphi_l$ are isomorphisms, $F:\cm_\ch\to [\ct,\cv]$ is a left Quillen functor. Thus $(U,F)$
is a Quillen pair between $\Mod(\ch)$ and $\cm_\ch$. We have to show that it is a Quillen equivalence of $\Mod(\ch)$ and $\cm_\ch$. Let $\eta:\Id\to UF$ 
be the unit of the adjunction. If $A$ is a cofibrant object in $[\ct,\cv]$ belonging to $\Phi$ then $A\cong A\ast Y$ where $Y:\ct^{\op}\to[\ct,\cv]$ 
is the Yoneda embedding. Since $A\in\Phi$, the pair $l=(A,\Id_\ct)$ belongs to $L(\ct)$. For $B$ in $\Mod(\ch)$ we have
\begin{align*}
\Mod(\ch)(Y(\{A,\Id_\ct\}),B)&\cong B(\{A,\Id_\ct\})\cong\{A,B\}\cong\cv(I,\{A,B\})\\
&\cong[\ct,\cv](A,\cv(I,B))\cong[\ct,\cv](A,B)\\
&\cong[\ct,\cv](A,\Mod(\ch)(Y,B))
\end{align*}
(the last isomorphism follows from the enriched Yoneda lemma, see \cite{B} 6.3.5). Thus $Y(\{A,\Id_\ct\})$ is the weighted colimit $A\ast Y$ 
in $\Mod(\ch)$. Consequently $\eta_A=\varphi_l$ and thus $\eta_A$ is a weak equivalence in $\cm_\ch$. 

Now, let $A$ be an arbitrary cofibrant object in $[\ct,\cv]$. Following our assumption, $A$ is weakly equivalent in $[\ct,\cv]$ to $A'$ belonging 
to $\Phi^\diamond$. Thus there is a zig-zag of weak equivalences in $[\ct,\cv]$ between $A$ and $A'$. Since both $A$ and $A'$ are cofibrant, 
this zig-zag can be changed into a zig-zag of weak equivalences in $[\ct,\cv]$ between cofibrant objects. Since $F$ is left Quillen and $U$ 
preserves weak equivalences, the composition $UF$ preserves weak equivalences between cofibrant objects. Thus $\eta_A$ is a weak equivalence 
in $\cm_\ch$ if and only if $\eta_{A'}$ is a weak equivalence in $\cm_\ch$. Hence, without any loss of generality, we can assume that $A\in\Phi^\diamond$. 
Consider a weighted colimit $G\ast D$ in $[\ct,\cv]$ where $G$ is $\Phi$-flat and homotopy invariant, $D:\cd\to\Phi^\diamond$, $G\ast D$ is cofibrant
and $\eta_{Dd}$ is a weak equivalence in $\cm_\ch$ for each $d\in\cd$. We have to prove that $\eta_{G\ast D}$ is a weak equivalence in $\cm_\ch$. 
Since $\Phi^+$-weighted colimits commute with $\Phi$-weighted limits, the functor $U$ preserves $\Phi^+$-weighted colimits. Thus the unit 
$\eta_{G\ast D}:G\ast D\to UF(G\ast D)$ is a $\Phi^+$-weighted colimit $G\ast\eta_{Dd}$ of units $\eta_{Dd}$ where $Dd\in\Phi$. Consider the commutative 
diagram
$$
\xymatrix@C=4pc@R=3pc{
G_c\ast D \ar[r]^{\gamma_G\ast D} \ar[d]_{\eta_{G_c\ast D}} &
G\ast D\ar [d]^{\eta_{G\ast D}}\\
UF(G_c\ast D) \ar[r]_{UF(\gamma_G\ast D)}& UF(G\ast D)
}
$$
For $X\in\ct$, let $D_X:\cd\to\cv$ be defined by $D_Xd=Dd(X)$. Since $G$ is homotopy invariant and $D_X$ is objectwise cofibrant, 
$(\gamma_G\ast D)_X = \gamma_G\ast D_X$ is a weak equivalence in $\cv$ for each $X\in\ct$. Thus $\gamma_G\ast D$ is a weak equivalence in $[\ct,\cv]$. 
Since $UF$ preserves weak equivalences between cofibrant objects and both $G_c\ast D$ and $G\ast D$ are cofibrant, $UF(\gamma_G\ast D)$ is a weak 
equivalence in $[\ct,\cv]$. Thus it suffices to prove that $\eta_{G_c\ast D}$ is a weak equivalence in $\cm_\ch$. 

Following \ref{le2.8},  $G_c\ast\eta D:G_c\ast D \to G_c\ast UFD$ is a weak equivalence in $\cm_\ch$. Since $\eta_{G_c\ast D}$ is the composition 
$k(G_c\ast\eta D)$ where $k: G_c\ast UFD \to UF(G_c\ast D)$ is the induced morphism, it remains to prove that $k$ is a weak equivalence in $[\ct,\cv]$. 
We have the commutative square
$$
\xymatrix@C=4pc@R=3pc{
G_c\ast UFD \ar[r]^{\gamma_G\ast UFD} \ar[d]_{k} &
G\ast UFD\ar [d]^{k'}\\
UF(G_c\ast D) \ar[r]_{UF(\gamma_G\ast D)}& UF(G\ast D)
}
$$
where $k'$ is the induced isomorphism. Since the diagram $(UFD)_X$ is objectvise cofibrant for each $X\in\ct$, 
$(\gamma_G\ast UFD)_X=\gamma_G\ast (UFD)_X$ is a weak equivalence in $\cv$. Thus $(\gamma_G\ast UFD)$ is a weak equivalence in $[\ct,\cv]$.
Since we have shown that $UF(\gamma_G\ast D)$ is a weak equivalence in $[\ct,\cv]$, $k$ is a weak equivalence in $[\ct,\cv]$ as well.

Let $f:A\to B$ be a mor\-phism between fibrant objects in $\Mod(\ch)$ such that $Uf$ is a weak equivalence. Then $Uf$ is a weak equivalence 
between fibrant objects in $\cm_\ch$ and thus it is a weak equivalence in $[\ct,\cv]$ (see \cite{H} 3.2.13). Hence $f$ is a weak equivalence, 
which means that $U$ reflects weak equivalences between fibrant objects. Finally, since $[\ct,\cv]$ and $\cm_\ch$ have the same cofibrant objects
and $U$ preserves weak equivalences, 
$$
\overline{\eta}_A: A \xrightarrow{\eta_A} UFA \xrightarrow{U\rho_{FA}} URFA
$$
is a weak equivalence for each cofibrant object $A$ in $\cm_\ch$. Following \cite{Ho} 1.3.16, $(U,F)$ is a Quillen equivalence.
\end{proof}
 
\begin{theo}\label{th3.4}
Let $\cv$ be a left proper finitely tractable monoidal model ca\-te\-go\-ry satisfying the monoid axiom and equipped with a fibrant approximation 
$\cv$-functor $R:\cv\to\cv$ preserving finite weighted limits. Let $\ct$ be a locally cofibrant finite weighted homotopy limit theory. 
Then the model categories $\Mod(\ch(\ct))$ and $\cm_{\ch(\ct)}$ are Quillen equivalent.
\end{theo}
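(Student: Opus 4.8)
The plan is to obtain the present theorem as the instance of \ref{th3.3} in which $\Phi$ is the class of \emph{all} finitely presentable cofibrant weights — that is, the case of a finite weighted homotopy limit theory — while relaxing the hypothesis of \ref{th3.3} that every object of $\cv$ be cofibrant. That blanket cofibrancy is used in \ref{th3.3} only to guarantee objectwise cofibrancy of the diagrams occurring in the induction; here it will be replaced by local cofibrancy of $\ct$ together with the monoid axiom, the latter (with local cofibrancy) being what makes $[\ct,\cv]$ and $\Mod(\ch(\ct))$ model categories at all (cf.\ \ref{re2.4}(1) and \ref{th2.3}). First I would transfer the two hypotheses of \ref{th3.3} not literally present here: since $R$ preserves finite weighted limits and finitely presentable weights form the saturation of finite weights, $R$ preserves $\Phi$-weighted limits exactly as in the proof of \ref{th2.3}.

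Next I would verify the covering condition of \ref{th3.3}, that every cofibrant weight lies in $\Phi^\diamond$. Because $\cv$ is finitely combinatorial and $\ct$ is locally cofibrant, the category $[\ct,\cv]$ is again finitely combinatorial, its generating (trivial) cofibrations being the maps $\ct(X,-)\otimes i$ with $X\in\ct$ and $i$ a generating (trivial) cofibration of $\cv$, hence between finitely presentable objects. Consequently every cofibrant $A\in[\ct,\cv]$ is a filtered colimit $A\cong\colim_\cj A_j$ of finitely presentable cofibrant objects $A_j$, each of which lies in $\Phi$. Writing this conical colimit as the weighted colimit $\Delta I\ast A_\bullet$, the weight $\Delta I$ is $\Phi$-flat because filtered colimits commute with finite, hence with finitely presentable, weighted limits (\ref{re2.4}(2)); and it is homotopy invariant because in a finitely combinatorial model category the comparison $\hocolim_\cj(-)\to\colim_\cj(-)$ from a filtered homotopy colimit to the filtered colimit is a weak equivalence on objectwise cofibrant diagrams. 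As $\ct$ is locally cofibrant, the projectively cofibrant $A_j$ are objectwise cofibrant, so $A_\bullet$ is objectwise cofibrant and $A=\Delta I\ast A_\bullet$ is cofibrant; thus $A\in\Phi^\diamond$ and the covering condition holds.

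With these two inputs in place I would re-run the argument of \ref{th3.3} essentially verbatim: $(U,F)$ is a Quillen pair between $\Mod(\ch)$ and $\cm_\ch$, one has $\eta_A=\varphi_l$ for $A\in\Phi$, the functor $U$ preserves $\Phi^+$-weighted colimits, and the final reflection of weak equivalences between fibrant objects together with the appeal to \cite{Ho} 1.3.16 are unaffected. The only steps needing care are those where \ref{th3.3} invoked blanket cofibrancy to assert objectwise cofibrancy — the homotopy-invariance steps asserting that $(\gamma_G\ast D)_X=\gamma_G\ast D_X$ is a weak equivalence, and the application of \ref{le2.8}. Since every object of $\Phi^\diamond$ is projectively, hence (by local cofibrancy of $\ct$) objectwise, cofibrant, every diagram $D:\cd\to\Phi^\diamond$ in the induction is objectwise cofibrant, which is exactly what the homotopy-invariance steps and \cite{H} 18.4.4 require.

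The step I expect to be the genuine obstacle is the analogue of \ref{le2.8}, which is stated for $\cv$ with all objects cofibrant and is applied in \ref{th3.3} to $\alpha=\eta D:D\to UFD$ in order to conclude that $G_c\ast\eta D$ is a weak equivalence in $\cm_\ch$. Here $D$ is objectwise cofibrant but its target $UFD$ need not be, since $U$ is only right Quillen. I would reprove the needed case by interposing a functorial objectwise cofibrant replacement $q\colon Q(UFD)\to UFD$, available since $[\ct,\cv]$ is a model $\cv$-category: the map $G_c\ast Q(\eta D)$ is then a weak equivalence by \cite{H} 18.4.4, both diagrams being objectwise cofibrant, so it remains to see that $G_c\ast q$ is a weak equivalence although $UFD$ is not objectwise cofibrant. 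This last point is precisely where the monoid axiom must intervene, controlling the tensor of weak equivalences that objectwise cofibrancy alone does not reach, and it is the delicate heart of the proof. Granting this reproved lemma, the remaining bookkeeping of \ref{th3.3} carries over without change.
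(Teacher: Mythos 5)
Your overall strategy matches the paper's: take $\Phi$ to be all finitely presentable cofibrant weights, write any cofibrant $A\in[\ct,\cv]$ as a filtered colimit of finitely presentable cofibrant objects, observe that this colimit is $\Phi$-flat and homotopy invariant, and re-run the argument of \ref{th3.3}. (One secondary caveat: the decomposition of an arbitrary cofibrant object as a filtered colimit of finitely presentable \emph{cofibrant} objects is not a formal consequence of finite combinatoriality as you suggest; it is the fat small object argument, \cite{MRV} 5.1, which is what the paper cites.)

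However, there is a genuine gap at exactly the step you yourself single out as ``the delicate heart of the proof'' and then leave unproved. You worry that $UFD$ need not be objectwise cofibrant because $U$ is only right Quillen, propose to interpose a cofibrant replacement $q\colon Q(UFD)\to UFD$, and then defer the remaining claim that $G_c\ast q$ is a weak equivalence to an unspecified intervention of the monoid axiom. This does not work as stated: the monoid axiom controls pushouts and transfinite compositions of tensors of trivial cofibrations with arbitrary objects, and gives you no handle on $G_c\ast q$ for a trivial fibration $q$ with non-cofibrant codomain. The correct resolution is much simpler and is the actual content of the paper's proof: since the diagram $D$ here takes values in $\Phi$ itself (not merely in $\Phi^\diamond$), each $UFDd$ is the \emph{representable} functor $\ct(\{Dd,\Id_\ct\},-)$ (this is established in the proof of \ref{th3.3}, where $\eta_{Dd}=\varphi_l$), hence projectively cofibrant. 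Thus $\eta D\colon D\to UFD$ is an objectwise weak equivalence in $\cm_\ch$ between objectwise cofibrant diagrams, \cite{H} 18.4.4 applies directly, and neither \ref{le2.8} nor any cofibrant replacement of $UFD$ is needed. The remaining uses of cofibrancy in $\cv$ (objectwise cofibrancy of $D_X$ and $(UFD)_X$) are supplied by the fact that projective cofibrations in $[\ct,\cv]$ are objectwise cofibrations (\cite{Sh} 24.4), again with no appeal to the monoid axiom. Until you replace your ``granting this reproved lemma'' by the representability observation (or some other actual proof), your argument is incomplete at its central step.
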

\begin{proof}
Let $\Phi$ be the class of all finitely presentable cofibrant weights. Following \cite{MRV} 5.1, any cofibrant object $A$ in $[\ct,\cv]$ is a colimit 
of a directed diagram $D:\cd\to [\ct,\cv]$ such that $Dd$ is cofibrant and finitely presentable in $[\ct,\cv]$ for each $d\in\cd$. This directed colimit 
is a weighted colimit $\overline{\Delta I}\ast\overline{D}$ where $\overline{\cd}$ is a free $\cv$-category on $\cd$, $\overline{D}:\overline{\cd}\to\ [\ct,\cv]$
is the extension of $D$ and $\overline{\Delta I}$ is the extension of the constant diagram $\Delta I:\cd\to\cv$ on $I$. This weighted colimit
is $\Phi$-flat (see \cite{K}, 4.9) and homotopy invariant (see \cite{LR} 4.5).  Following the proof of \ref{th3.3}, $\eta_{Dd}$ is a weak equivalence
in $\cm_\ch$ between cofibrant objects for each $d\in\cd$. This proof also yields that $\eta_A$ is a weak equivalence in $\cm_\ch$ and thus it proves
the result. We do not need \ref{le2.8} because $\eta D$ is an objectwise weak equivalence in $\cm_\ch$ between objectwise cofibrant diagrams and thus 
$G_c\ast\eta D$ is a weak equivalence in $\cm_\ch$ (cf. \cite{H} 18.4.4). Since each cofibration in $[\ct,\cv]$ is an objectwise cofibration
(see \cite{Sh} 24.4), the diagrams $D_X$ and $(UFD)_X$ are objectwise cofibrant for each $X\in\ct$. Thus, in the whole proof, we do not need to assume
that all objects of $\cv$ are cofibrant.
\end{proof}

\begin{rem}\label{re3.5}
{
\em
 
(1) Recall that an \textit{algebraic} $\cv$-\textit{theory} is a small $\cv$-category $\ct$ with finite products and a $\ct$-\textit{algebra} 
is a $\cv$-functor $A:\ct\to\cv$ preserving finite products. A \textit{homotopy} $\ct$-\textit{algebra} preserves finite products
up to a weak equivalence, i.e., 
$$
A(X_1\times\dots\times X_n)\to A(X_1)\times\dots\times A(X_n)
$$
are weak equivalences.  

Let $\Phi$ consist of constant weights on finite discrete ca\-te\-go\-ries with the value $I$. Then $\Phi$-weighted homotopy limit theories are precisely
algebraic theories. Following \cite{KS} 3.8, the saturation $\Phi^\ast$ consists of finite coproducts of representables. It is easy to see that
the corresponding sets of morphisms $\varphi_l$ for $\Phi$ and $\Phi^\ast$ are equal for each algebraic theory $\ct$. Thus both algebras and
homotopy algebras are unchanged by the passage from $\Phi$ to $\Phi^\ast$. Over simplicial sets, every homotopy colimit is weakly equivalent 
to a homotopy invariant $\Phi^{\ast +}$-colimit of finite coproducts of representables (see \cite{V}). Thus the result of Badzioch and Bergner 
is a consequence of \ref{th3.3} applied to $\Phi^\ast$. We can not use $\Phi$ for this purpose because $[\ct,\cv]$ contains no elements of $\Phi$, 
and so $\Phi^\diamond$ is also empty. Observe that the saturation does not change the flatness, i.e., $\Phi^+=\Phi^{\ast +}$. 

J. Bourke \cite{Bo} proved that, over $\Cat$, any cofibrant weight belongs to the iterative closure of finite coproducts of representables under
co\-li\-mits weighted by homotopy invariant $\Phi$-flat weights. Thus there is a rigidification theorem for homotopy algebras in this case as well.

(2) Let $\Phi$ consist of constant weights on finite discrete ca\-te\-go\-ries with the value $I$ and of weights on the single object discrete 
category with a finitely presentable cofibrant value. Then $\Phi$-weighted homotopy limit theories contain finite products and cotensors with finitely 
presentable cofibrant objects and are related to enriched Lawvere theories in the sense of \cite{P}. Over $\SSet$, \cite{V} and \ref{th3.3} yield 
the rigidification theorem for them. Again, we have pass to the saturation $\Phi^\ast$ of $\Phi$.
}
\end{rem}

\section{Conservative free completion}

\begin{defi}\label{def4.1}
{\em
Let $\ch=(\ct,L)$ be a finite weighted homotopy limit sketch. We say that $E:\ct\to\ct^\ast$ is a \textit{conservative free completion} of $\ch$ 
if $\ct^\ast$ has limits weighted by finitely presentable cofibrant weights of diagrams $D:\cd\to\ct$ and, for each model $A:\ct\to\cv$, there is 
an \textit{essentially unique} (i.e., unique up to an isomorphism) $\cv$-functor $A^\ast:\ct^\ast\to\cv$ preserving limits weighted by finitely 
presentable cofibrant weights of diagrams $D:\cd\to\ct$ such that $A^\ast E\cong A$. 
}
\end{defi}

\begin{lemma}\label{le4.2}
Let $\cv$ be locally finitely presentable as a closed category. Then each finite weighted homotopy limit sketch has a conservative free completion. 
Moreover, the functor $E:\ct\to\ct^\ast$ is a full embedding provided that $\ch$ is normal.
\end{lemma}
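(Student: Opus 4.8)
The plan is to realise $\ct^\ast$ as the opposite of the finitely presentable part of the category of ordinary models and to read the universal property off from the enriched theory of locally finitely presentable categories. The inputs I would take from Kelly's study of finite limits in the enriched context \cite{K} are that, $\ch=(\ct,L)$ being finite and $\cv$ being locally finitely presentable as a closed category, $\Mod(\ch)$ is a reflective subcategory of $[\ct,\cv]$ (as already used in \ref{th2.3}), and that its inclusion $U\colon\Mod(\ch)\to[\ct,\cv]$ preserves filtered colimits; the latter holds because each defining condition $t^A_l$ is assembled from a limit weighted by the finitely presentable weight $G_l$, and filtered colimits commute with such limits in $\cv$. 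Write $F\dashv U$ for the reflection.

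Next I would fix the functor $E$ and the category $\ct^\ast$. Put $EX=F\ct(X,-)$; since a morphism $f\colon X\to X'$ induces $\ct(f,-)\colon\ct(X',-)\to\ct(X,-)$, applying $F$ and passing to opposites makes $X\mapsto EX$ a $\cv$-functor. I would let $\ct^\ast$ be the opposite of the (essentially small) full subcategory of $\Mod(\ch)$ on the reflections $F\bigl(G\ast\ct(D,-)\bigr)$, where $G$ runs over the finitely presentable cofibrant weights and $D\colon\cd\to\ct$ over diagrams---these are the very objects $G\ast\ct(D,-)$ occurring in \ref{th2.7}. As $F$ is a left adjoint it preserves these weighted colimits, so in $\ct^\ast$ each of its objects is the weighted limit of the $\ct$-diagram $D$ transported along $E$; in particular $\ct^\ast$ carries all the limits demanded in \ref{def4.1} and $E$ lands in $\ct^\ast$. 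That this family is robust---closed under iterating such colimits---is guaranteed by the saturation of the finite weights (\cite{KS}, 3.8 and 3.13).

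For the universal property, given a model $A$ I would set $A^\ast(M)=\Mod(\ch)(M,A)$. A contravariant hom turns the defining weighted colimits into weighted limits in $\cv$, so $A^\ast$ preserves limits weighted by finitely presentable cofibrant weights of diagrams from $\ct$; and the adjunction together with the enriched Yoneda lemma \cite{K1} give $A^\ast EX=\Mod(\ch)(F\ct(X,-),A)\cong[\ct,\cv](\ct(X,-),A)\cong AX$, so $A^\ast E\cong A$. For essential uniqueness let $B\colon\ct^\ast\to\cv$ preserve the same limits with $BE\cong A$; on an object $M=\{G,ED\}$ one has $BM\cong\{G,(BE)D\}\cong\{G,AD\}\cong A^\ast M$. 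The genuinely delicate point is that these comparisons be natural in $M$: I would secure this by observing that $E(\ct)$ is limit-dense in $\ct^\ast$ (every object is such a weighted limit of $\ct$-diagrams), so that $B$ and $A^\ast$ are both limit-preserving extensions of $A$ along $E$ and are therefore canonically isomorphic.

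Finally, for the ``moreover'' clause I would show that normality forces every representable to be a model. When $X_l=\{G_l,D_l\}$ and $\delta_l$ is the limit cone, the defining isomorphism of the weighted limit yields $\ct(X,X_l)\cong\{G_l,\ct(X,D_l)\}$ for all $X$, which is precisely the invertibility of $t^{\ct(X,-)}_l$; hence $\ct(X,-)\in\Mod(\ch)$ and $EX=F\ct(X,-)\cong\ct(X,-)$. Consequently
\[
\ct^\ast(EX,EX')=\Mod(\ch)(\ct(X',-),\ct(X,-))=[\ct,\cv](\ct(X',-),\ct(X,-))\cong\ct(X,X'),
\]
again by enriched Yoneda, and since the composite is the canonical comparison induced by $E$, the functor $E$ is fully faithful and injective on objects, i.e.\ a full embedding. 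I expect the principal obstacle not to be any single computation but to lie in marshalling the structural results of \cite{K} and \cite{KS} into exactly the shape ``every object of $\ct^\ast$ is a single finitely presentable cofibrant weighted limit of a $\ct$-diagram,'' for it is this statement that matches the restricted class of limits in \ref{def4.1} and underwrites both the existence and the essential uniqueness of $A^\ast$.
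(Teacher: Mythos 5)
Your construction is in substance the one the paper uses: $\ct^\ast$ is the opposite of a full subcategory of $\Mod(\ch)$ obtained by reflecting finitely presentable cofibrant presheaves into models, and $E$ is the reflected Yoneda embedding. The paper merely dresses the reflection $F\dashv U$ in model-categorical clothing (the trivial model structure on $[\ct,\cv]$ and its left Bousfield localization $\overline{\cm}$, for which $\Ho\overline{\cm}$ is exactly the orthogonality class $\Mod(\ch)$ and $P$ is the reflector), a choice the author explains in \ref{re4.3}(1). Three differences are worth recording. First, the paper takes $\ct^\ast=P([\ct,\cv]_{\cfp})^{\op}$, the reflections of \emph{all} finitely presentable cofibrant objects, whereas you take only the single-step colimits $F(G\ast\ct(D,-))$; your smaller category satisfies \ref{def4.1} and simplifies matters, but the larger one is what \ref{re4.5} and \ref{le5.6} later exploit. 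Second, your proof of the ``moreover'' clause---normality makes every representable a model, so $EX=F\ct(X,-)\cong\ct(X,-)$ and full fidelity is the enriched Yoneda lemma---is simpler and more transparent than the paper's argument that $E_{AB}$ is simultaneously a split epimorphism (via $P$ and $J$) and a monomorphism (via $\overline{Y}\cong(\overline{Y})^\ast E$), and it is correct. Third, and this is where your sketch is thinnest, essential uniqueness of $A^\ast$ does not follow from the observation that every object of $\ct^\ast$ is \emph{some} limit $\{G,ED\}$: to conclude that two extensions preserving only the limits of \ref{def4.1} and agreeing on $E$ are canonically isomorphic, you would need $E$ to be codense \emph{with respect to weights of that restricted shape}, and the canonical codensity weight at $M$ is $UM:\ct\to\cv$, which is not of that shape. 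The paper sidesteps this by never arguing inside $\ct^\ast$: it invokes the genuine universal property of $Y:\ct^{\op}\to[\ct,\cv]_{\cfp}$ as a free completion under finite weighted colimits (\cite{KS}, 3.13, together with the saturation of cofibrant weights from \cite{LR}), notes that the essentially unique colimit-preserving extension $\overline{H}$ of $H=A^{\op}$ inverts every $\varphi_l$ and hence descends along the reflector $P$, and inherits uniqueness from the uniqueness of $\overline{H}$. You should either import that detour through $[\ct,\cv]_{\cfp}$ or verify the required restricted codensity of $E$ directly; as written, the naturality of your comparison $BM\cong A^\ast M$ is asserted rather than proved.
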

\begin{proof}
Let $\ct$ be a finite weighted homotopy limit sketch. Then 
$$
Y:\ct^{\op}\to[\ct,\cv]
$$ 
is a free completion of $\ct^{\op}$ under weighted colimits and the full subcategory $[\ct,\cv]_{\fp}$ of $[\ct,\cv]$ consisting of finitely presentable 
objects is a free completion of $\ct^{\op}$ under finite weighted colimits (see \cite{KS}, 3.13). This implies that for each $\cv$-functor 
$H:\ct^{\op}\to\cv^{\op}$, there is an essentially unique $\cv$-functor $\overline{H}:[\ct,\cv]\to\cv^{\op}$ preserving weighted colimits such that 
$\overline{H}Y\cong H$. Moreover, its restriction $\overline{H}_0:[\ct,\cv]_{\fp}\to\cv^{\op}$ is an essentially unique $\cv$-functor preserving finite 
weighted colimits such that $\overline{H}_0Y\cong H$. Let $G:\cd^{\op}\to\cv$ be a finitely presentable weight and $D:\cd\to[\ct,\cv]_{\fp}$ a diagram. 
Following \cite{KS}, 3.13 again, $G$ belongs to the closure of representable functors under finite colimits in $[\cd^{\op},\cv]$. The $\cv$-functor 
$$
-\ast D:[\cd^{\op},\cv]\to[\ct,\cv]
$$ 
preserves weighted colimits because it has a right $\cv$-adjoint 
$$
[\cd^{\op},\cv](-,[\ct,\cv](D,-)):[\ct,\cv]\to[\cd^{\op},\cv].
$$ 
Since $\cd(-,d)\ast D=Dd$, $[\ct,\cv]_{\fp}$ has colimits weighted by finitely presentable weights and $\overline{H}_0:[\ct,\cv]_{\fp}\to\cv^{\op}$  
preserves them. Let $[\ct,\cv]_{\cfp}$ be the full subcategory of $[\ct,\cv]$ consisting of finitely presentable cofibrant objects. Since the class 
of cofibrant weights is saturated (see \cite{LR}), $[\ct,\cv]_{\cfp}$ is closed under colimits weighted by finitely presentable cofibrant weights. 
Moreover, $\overline{H}_1:[\ct,\cv]_{\cfp}\to\cv^{\op}$ preserves these co\-li\-mits.  

Following \cite{R} 4.6, $[\ct,\cv]$ is a finitely tractable model category with cofibrations consisting of all morphisms and weak equivalences 
of isomorphisms. The same is true for $\cv$ and $[\ct,\cv]$ is a model $\cv$-category where both $[\ct,\cv]$ and $\cv$ are taken 
with this trivial model structure. Following \cite{B} 3.18., there exists a left Bousfield $\cv$-localization $\overline{\cm}$ of $[\ct,\cv]$ 
with respect to the set $\{\varphi_l\backslash l\in L\}$ from the proof of \ref{th2.7}. Then $\overline{H}:[\ct,\cv]\to\cv^{\op}$ is a left Quillen 
functor where $\cv^{\op}$ is again taken with the trivial model structure. Since $\overline{H}$ sends each $\varphi_l$ to an isomorphism and $\Id$ 
is a cofibrant replacement functor on $[\ct,\cv]$, $\overline{H}:\overline{\cm}\to\cv^{\op}$ is a left Quillen functor (see \cite{H} 3.3.18 and \cite{B}). 
Thus it induces a left adjoint functor $H':\Ho\overline{\cm}\to\Ho\cv^{\op}$ (see \cite{Ho} 1.3.10). Following \cite{R} 4.6, $\Ho\overline{\cm}$ 
is the reflective full subcategory of $[\ct,\cv]$ consisting of objects orthogonal to each $\varphi_l$ and the canonical functor 
$P:\overline{\cm}\to\Ho\overline{\cm}$ is the corresponding reflector. Since $\cv^{\op}=\Ho\cv^{\op}$, both $\Ho\overline{\cm}$ and 
$\Ho\cv^{\op}$ are $\cv$-categories. Let $J:\Ho\overline{\cm}\to\overline{\cm}$ denote the inclusion $\cv$-functor. Since 
$H'\cong H'PJ\cong \overline{H}J$, $H'$ is a $\cv$-functor. We have
$$
H'(V\cdot PA)\cong H'P(V\cdot A)\cong\overline{H}(V\cdot A)\cong V\cdot\overline{H}A\cong V\cdot H'PA.
$$
Since $P$ is surjective on objects, $H'$ preserves tensors. Thus $H'$ is a left $\cv$-adjoint (see \cite{B} 6.7.6) and, consequently, it preserves
weighted co\-li\-mits. Thus the full subcategory $P([\ct,\cv]_{\cfp})^{op}$ of $(\Ho\overline{\cm})^{\op}$ consisting of $P$-images of objects from 
$[\ct,\cv]_{\cfp}$ is a conservative free completion of $\ch$.  

It follows from the construction of $E:\ct\to\ct^\ast$ as $(PY)^{\op}$ that morphisms $E_{AB}:\ct(A,B)\to\ct^\ast(EA,EB)$ are split epimorphisms
(as the composition of isomorphisms $Y^{\op}_{AB}$ with morphisms $P^{\op}_{YA,YB}$ split by $J^{\op}_{YA,YB}$). The Yoneda embedding 
$\overline{Y}:\ct\to[\ct^{\op},\cv]$ preserves existing weighted limits provided that $\ch$ is normal. Since $\overline{Y}\cong (\overline{Y})^\ast E$, 
the $E_{AB}$ are monomorphisms. Thus they are isomorphisms, which proves that $E$ is a full embedding.
\end{proof}
 
\begin{rem}\label{re4.3}
{
\em
(1) Since I could not find any reference for the existence of a conservative $\cv$-completion, I gave the proof above written in the language
of model categories. 

(2) Let $\cv$ be a finitely combinatorial monoidal model category having all objects cofibrant and equipped with a fibrant approximation 
$\cv$-functor $R:\cv\to\cv$ preserving finite weighted limits. We need this assumption because $\ct$ being locally cofibrant does not imply 
that $\ct^\ast$ is locally cofibrant. Consider the commutative square
$$
\xymatrix@C=4pc@R=3pc{
\Mod(\ch^\ast) \ar[r]^{U_{\ct^\ast}} \ar[d]_{U_1} &
\cm_{\ch^\ast}\ar [d]^{U_2}\\
\Mod(\ch) \ar[r]_{U_\ct}& \cm_\ch
}
$$
where both $U_1$ and $U_2$ are given by the precomposition with $E$. The functor $U_1$ is an equivalence of categories.

If $B:\ct^\ast\to\cv$ is a homotopy model of $\ch^\ast$ then $U_2B=BE$ is a homotopy model of $\ch$. But we can prove more.
} 
\end{rem}

\begin{lemma}\label{le4.4}
$U_2$ is a right Quillen functor.
\end{lemma}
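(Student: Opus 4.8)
The plan is to realize $U_2$ as the right adjoint of a Quillen pair $\cm_\ch\rightleftarrows\cm_{\ch^\ast}$. Since $U_2$ is precomposition with $E$, it is a right adjoint, its left adjoint being the left Kan extension $F_2=\operatorname{Lan}_E\colon[\ct,\cv]\to[\ct^\ast,\cv]$ (which exists because these are presheaf $\cv$-categories). First I would record that $U_2$ is right Quillen for the two \emph{projective} structures: precomposition preserves objectwise fibrations and objectwise trivial fibrations, which are exactly the projective fibrations and trivial fibrations. A left Bousfield localization leaves cofibrations, hence trivial fibrations, unchanged and only removes fibrations, so $U_2$ still carries the fibrations and trivial fibrations of $\cm_{\ch^\ast}$ into projective (trivial) fibrations of $[\ct,\cv]$; thus $(F_2,U_2)\colon[\ct,\cv]\rightleftarrows\cm_{\ch^\ast}$ is already a Quillen pair.

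To localize the source to $\cm_\ch=L_{\{\varphi_l\}}[\ct,\cv]$ I would apply the standard criterion (\cite{H} 3.3.18, \cite{B}): $(F_2,U_2)$ remains a Quillen pair $\cm_\ch\rightleftarrows\cm_{\ch^\ast}$ provided $F_2$ sends each generating localizing map $\varphi_l\colon G_l\ast\ct(D_l,-)\to\ct(X_l,-)$ to a weak equivalence in $\cm_{\ch^\ast}$; no cofibrant replacement is needed, as the $\varphi_l$ lie between cofibrant objects. Because $F_2$ preserves weighted colimits and $F_2(\ct(Z,-))\cong\ct^\ast(EZ,-)$, one finds
$$
F_2(\varphi_l)\colon G_l\ast\ct^\ast(ED_l,-)\longrightarrow\ct^\ast(EX_l,-).
$$
Now $\ct^\ast$ carries the weighted limit $X_m=\{G_l,ED_l\}$, and since $\ch^\ast$ is normal the pair $m=(G_l,ED_l)$ lies in $L(\ct^\ast)$ with its own localizing map $\varphi_m\colon G_l\ast\ct^\ast(ED_l,-)\to\ct^\ast(X_m,-)$. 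Transporting the cone $\delta_l$ along $E$ and using the universal property of $X_m$ produces a comparison $\xi_l\colon EX_l\to X_m$, and a short enriched-Yoneda computation should give the factorization $F_2(\varphi_l)=\ct^\ast(\xi_l,-)\circ\varphi_m$. As $\varphi_m$ is a weak equivalence in $\cm_{\ch^\ast}$, two-out-of-three reduces the problem to showing that $\ct^\ast(\xi_l,-)$ is a weak equivalence in $\cm_{\ch^\ast}$.

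This last point is where I expect the real work. Being a map between cofibrant representables, $\ct^\ast(\xi_l,-)$ is a weak equivalence in $\cm_{\ch^\ast}$ iff $[\ct^\ast,\cv](\ct^\ast(\xi_l,-),B)$ is a weak equivalence in $\cv$ for every fibrant $B$, and by the enriched Yoneda lemma this map is $B(\xi_l)\colon B(EX_l)\to B(X_m)$ (exactly the detection of local equivalences used in the proof of \ref{th2.7}). The fibrant objects of $\cm_{\ch^\ast}$ are the fibrant homotopy models of $\ch^\ast$, so for such $B$ the map $t^B_m\colon B(X_m)\to\{G_l,BED_l\}$ is a weak equivalence. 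Crucially, by \ref{re4.3} the object $U_2B=BE$ is a homotopy model of $\ch$, whence $t^{BE}_l\colon B(EX_l)\to\{G_l,BED_l\}$ is a weak equivalence as well. The very definition of $\xi_l$ from $\delta_l$ yields the identity $t^{BE}_l=t^B_m\circ B(\xi_l)$, so $B(\xi_l)$ is a weak equivalence by two-out-of-three. Hence $\ct^\ast(\xi_l,-)$, and with it $F_2(\varphi_l)$, is a weak equivalence in $\cm_{\ch^\ast}$; the criterion then gives that $(F_2,U_2)$ is a Quillen pair, so $U_2$ is right Quillen. The main obstacle is precisely the identification $F_2(\varphi_l)=\ct^\ast(\xi_l,-)\circ\varphi_m$ together with the compatibility $t^{BE}_l=t^B_m\circ B(\xi_l)$, which is what lets the hypothesis of \ref{re4.3} (that $BE$ is a homotopy model of $\ch$) do its job.
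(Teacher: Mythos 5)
Your argument is correct and follows the same overall strategy as the paper's: realize $U_2$ as the right adjoint of $F_2=\operatorname{Lan}_E$, note that $F_2$ is left Quillen for the projective structures, and reduce everything via the localization criterion to showing that $F_2\varphi_l$ is a weak equivalence in $\cm_{\ch^\ast}$. The only real divergence is how that last point is handled. The paper simply observes that $F_2\varphi_l\colon G_l\ast\ct^\ast(ED_l,-)\to\ct^\ast(EX_l,-)$ \emph{is} the localizing morphism of $\ch^\ast$ attached to the pair $(G_l,ED_l)$ --- in your language, it takes the comparison $\xi_l\colon EX_l\to\{G_l,ED_l\}$ to be an isomorphism. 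This is indeed the case and is worth making explicit: by the proof of \ref{le4.2}, $E=(PY)^{\op}$ where $P$ is a weighted-colimit-preserving reflector that inverts each $\varphi_l$, so $E$ preserves the limits sketched by the normal sketch $\ch$. You instead keep $\xi_l$ and kill $\ct^\ast(\xi_l,-)$ by testing against fibrant objects $B$ of $\cm_{\ch^\ast}$, invoking the assertion of \ref{re4.3} that $BE$ is a homotopy model of $\ch$. Be aware that this detour is very close to circular: since $t_l^{BE}=t_m^B\circ B(\xi_l)$, the cited assertion is (as $B$ varies) essentially equivalent to $B(\xi_l)$ being a weak equivalence, and its own natural justification is precisely the invertibility of $\xi_l$ that you are trying to avoid establishing. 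Your argument is admissible only because \ref{re4.3} is stated earlier in the paper; a self-contained proof should discharge it via the isomorphism argument above, at which point your factorization collapses to the paper's one-line identification. Your observation that no cofibrant replacement of the $\varphi_l$ is needed is fine (the paper performs one anyway, to match the literal hypotheses of \cite{H} 3.3.18), and the rest of your reductions --- $F_2$ of a representable, detection of local equivalences between cofibrant objects by enriched homs into fibrant objects, two-out-of-three --- are all as in the paper.
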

\begin{proof}
We will prove that the left adjoint $F_2$ of $U_2$ is left Quillen. Consider a morphism $\varphi_l:G_l\ast\ct(D_l,-)\to\ct(X_l,-)$
from the proof of \ref{th2.7}. Since $F_2\varphi_l:G_l\ast\ct^\ast(ED_l,-)\to\ct^\ast(EX_l,-)$ is the corresponding morphism in $\ct^\ast$,
it is a weak equivalence in $\cm_{\ch^\ast}$. Let $Q$ be a cofibrant replacement functor on $\cm_{\ch}$. We have a commutative square
$$
\xymatrix@C=4pc@R=3pc{
F_2QA \ar[r]^{F_2Q\varphi} \ar[d]_{F_2\gamma_A} &
F_2QB\ar [d]^{F_2\gamma_B}\\
F_2A \ar[r]_{F_2\varphi}& F_2B
}
$$
where $\varphi:A\to B$ denotes $\varphi_l$. The morphisms $\gamma_A$ and $\gamma_B$ are trivial fibrations in $[\ct,\cv]$ and 
$F_2:[\ct,\cv]\to [\ct^\ast,\cv]$ is left Quillen. Since $\varphi$ is a morphism between cofibrant objects, $F_2\gamma_A$ and $F_2\gamma_B$ are 
weak equivalences in $[\ct^\ast,\cv]$ and thus in $\cm_{\ch^\ast}$. Hence $F_2Q\varphi$ is a weak equivalence. Following \cite{H} 3.3.18, 
$F_2:\cm_\ch\to\cm_{\ch^\ast}$ is left Quillen.
\end{proof}

\begin{rem}\label{re4.5}
{
\em
The functor $U_2$ has a right $\cv$-adjoint $S$ as well. If $\ct$ is normal, this right adjoint can be easily calculated.

Following the proof of \ref{le4.2}, $\ct^\ast$ is a coreflective full subcategory of $\tilde{\ct}=([\ct,\cv]_{\cfp})^{\op}$ with the inclusion
$\overline{J}=J^{\op}$ and its right $\cv$-adjoint $\overline{P}=P^{\op}$. We use the notation of the proof of \ref{le4.2}, except that $J$
and $P$ are the domain restrictions of those from \ref{le4.2}. The counit of this adjunction will be denoted 
$\varepsilon:\overline{J}\overline{P}\to\Id$. Then $E:\ct\to\ct^\ast$ is the composition $\overline{P}\overline{Y}$ where
$\overline{Y}:\ct\to\tilde{\ct}$ is the dual of the codomain restriction of the Yoneda embedding $\ct^{\op}\to[\ct,\cv]$. Since the values 
of $\overline{Y}$ belong to $\ct^\ast$, we have $\overline{J}\overline{P}\overline{Y}\cong\overline{Y}$.

The $\cv$-category $[\ct,\cv]$ is isomorphic to a full reflective subcategory of $[\tilde{\ct},\cv]$ consisting of $\cv$ functors preserving weighted
limits. Thus the restriction $\cv$-functor $[\overline{Y},\cv]$ has a right $\cv$-adjoint sending $A:\ct\to\cv$ to its weighted limit preserving
extension $\tilde{A}:\tilde{\ct}\to\cv$. The $\cv$-functor $[\overline{P},\cv]:[\ct^\ast,\cv]$ has a right $\cv$-adjoint $[\overline{J},\cv]$. 
Thus $U_2=[E,\cv]=[\overline{P}\overline{Y},\cv]$ has a right $\cv$-adjoint $\tilde{-}\overline{J}$ sending $A$ to the composition
$\tilde{A}\overline{J}$.
}
\end{rem}

\begin{lemma}\label{le4.6}
$U_2$ preserves cofibrations.
\end{lemma}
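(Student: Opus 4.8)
The plan is to reduce everything to the projective model structures and then to an enriched Yoneda computation. Since a left Bousfield localization leaves the cofibrations unchanged, a map is a cofibration in $\cm_\ch$ (resp. $\cm_{\ch^\ast}$) precisely when it is a projective cofibration in $[\ct,\cv]$ (resp. $[\ct^\ast,\cv]$). So it suffices to show that $U_2=[E,\cv]$, precomposition with $E$, preserves projective cofibrations. Because colimits in functor categories are computed pointwise, $U_2$ preserves all colimits; hence, by the standard cell-complex argument, I would only need to check that $U_2$ carries a generating set of projective cofibrations of $[\ct^\ast,\cv]$ to cofibrations of $[\ct,\cv]$.

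The generating projective cofibrations of $[\ct^\ast,\cv]$ are the maps $\ct^\ast(X,-)\cdot i$, where $X$ ranges over the objects of $\ct^\ast$, $i$ over the generating cofibrations of $\cv$, and $\cdot$ denotes the tensoring of $[\ct^\ast,\cv]$ over $\cv$. Since $[\ct,\cv]$ is a $\cv$-model category, this tensoring is a left Quillen bifunctor, so $W\cdot i$ is a cofibration as soon as $W$ is projectively cofibrant and $i$ is a cofibration of $\cv$. As $U_2(\ct^\ast(X,-)\cdot i)=\ct^\ast(X,E-)\cdot i$, the whole matter reduces to proving that the restricted representable $\ct^\ast(X,E-):\ct\to\cv$ is projectively cofibrant for every object $X$ of $\ct^\ast$.

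For this I would use the explicit description of $\ct^\ast$ and of $E$ from \ref{re4.5}. There $\ct^\ast$ is the coreflective full subcategory of $\tilde{\ct}=([\ct,\cv]_{\cfp})^{\op}$, with inclusion $\overline{J}$ and coreflector $\overline{P}$ (so that $\overline{J}\dashv\overline{P}$), and $E=\overline{P}\,\overline{Y}$, where $\overline{Y}c$ is $\ct(c,-)$ viewed as an object of $\tilde{\ct}$. An object $X$ of $\ct^\ast$ is therefore a finitely presentable cofibrant functor $W_X\in[\ct,\cv]_{\cfp}$, and for each $c\in\ct$ the coreflection adjunction together with the Yoneda lemma gives
$$
\ct^\ast(X,Ec)=\ct^\ast(X,\overline{P}\,\overline{Y}c)\cong\tilde{\ct}(\overline{J}X,\overline{Y}c)\cong[\ct,\cv](\ct(c,-),W_X)\cong W_X(c),
$$
naturally in $c$. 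Hence $\ct^\ast(X,E-)\cong W_X$, which is cofibrant by the very definition of $[\ct,\cv]_{\cfp}$, and the reduction above finishes the proof.

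The step I expect to be delicate is exactly the identification $\ct^\ast(X,E-)\cong W_X$: it must hold for \emph{every} object $X$ of $\ct^\ast$, not only for the images $Ec$ (for which it collapses to $\ct(c,-)$ because $E$ is a full embedding by \ref{le4.2}), and it depends on reading off the correct variance of $\tilde{\ct}$ and of $\overline{Y}$ from \ref{re4.5}. Once the adjunction $\overline{J}\dashv\overline{P}$ and the formula $E=\overline{P}\,\overline{Y}$ are in hand, the computation is a routine application of enriched Yoneda, and the remaining bookkeeping — that tensoring a cofibrant object with a generating cofibration yields a cofibration, and that $U_2$ commutes with the pointwise colimits used to build general cofibrations — is entirely standard.
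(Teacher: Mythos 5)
Your proof is correct, but it takes a genuinely different route from the paper's. The paper dualizes: by adjunction, $U_2$ preserves cofibrations if and only if its right $\cv$-adjoint $S$ from \ref{re4.5} preserves trivial fibrations; since $S\alpha=\tilde{\alpha}\overline{J}$ and the weighted-limit-preserving extension satisfies $\tilde{\alpha}_{\{G,D\}}=\{G,\alpha D\}$ with $G$ a finitely presentable cofibrant weight, the analogue of Hirschhorn's 18.4.2 shows that $\tilde{\alpha}$, hence $S\alpha$, is a trivial fibration. You instead attack $U_2$ directly: you reduce to the generating projective cofibrations $\ct^\ast(X,-)\cdot i$ (legitimate, since $U_2$ preserves the pointwise colimits, retracts and tensors used to build cell complexes, and Bousfield localization does not change cofibrations), and then identify $U_2(\ct^\ast(X,-))=\ct^\ast(X,E-)$ with the underlying finitely presentable cofibrant presheaf $W_X=\overline{J}X$ via the coreflection adjunction $\overline{J}\dashv\overline{P}$ and enriched Yoneda, so that $U_2(\ct^\ast(X,-)\cdot i)\cong W_X\cdot i$ is a cofibration by the pushout-product axiom. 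Your Yoneda computation is consistent with the paper's formula $S(A)=\tilde{A}\overline{J}$, and you correctly flag the one delicate point (the identification must hold for all $X$, with the right variance of $\tilde{\ct}$). Both arguments rest on the same input from \ref{re4.5} -- that objects of $\ct^\ast$ are, under $\overline{J}$, finitely presentable cofibrant objects of $[\ct,\cv]$ -- but yours trades the cotensor estimate $\{G,\alpha D\}$ for an explicit computation of the restricted representables, which is arguably more elementary and yields the useful byproduct that $U_2$ carries representables to cofibrant objects.
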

\begin{proof}
The claim is equivalent to the preservation of trivial fibrations by the right $\cv$-adjoint $S$. Consider a trivial fibration $\alpha:A\to B$
and an object $\{G,D\}$ in $\tilde{\ct}$ where $G$ is a finitely presentable cofibrant weight. Since $\tilde{\alpha}_{\{G,D\}}=\{G,\alpha D\}$, 
we get (analogously as in \cite{H} 18.4.2) that $\tilde{\alpha}$ is a trivial fibration. Thus $S\alpha=\tilde{\alpha}\overline{J}$ is a trivial
fibration.
\end{proof}

\section{Rigidification}

\begin{theo}\label{th5.1} 
Let $\cv$ be a finitely combinatorial monoidal model ca\-te\-go\-ry having all objects cofibrant and equipped with a fibrant appro\-xi\-ma\-tion 
$\cv$-functor $R:\cv\to\cv$ preserving finite weighted limits. Let $\ch=(\ct,L)$ be a normal finite weighted homotopy limit sketch. Then the following 
conditions are equivalent:
\begin{enumerate}
\item[(i)] $(U_\ct,F_\ct)$ is a Quillen equivalence,
\item[(ii)] each homotopy model of $\ch$ is weakly equivalent in $[\ct,\cv]$ to a model of $\ch$,
\item[(iii)] $(U_2,F_2)$ is a Quillen equivalence, and
\item[(iv)] each fibrant homotopy model $A$ of $\ch$ is weakly equivalent in $[\ct,\cv]$ to $U_2B$ for a fibrant homotopy model $B$ of $\ch^\ast$.
\end{enumerate}
\end{theo}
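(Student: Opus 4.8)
The plan is to prove the cycle (i)$\Rightarrow$(ii)$\Rightarrow$(iv)$\Rightarrow$(iii)$\Rightarrow$(i), the pivotal input being that rigidification always holds for the completion $\ct^\ast$. Concretely, take $\Phi$ to be the class of all finitely presentable cofibrant weights. Since every object of $\cv$ is cofibrant, $\ct^\ast$ is locally cofibrant (\ref{re3.2}(4)) and, by \ref{le4.2}, carries all finitely presentable cofibrant weighted limits, so $\ct^\ast$ is a finite weighted homotopy limit theory in the sense of \ref{re3.2}(1). Exactly as in the proof of \ref{th3.4}, the filtered-colimit presentation of cofibrant objects in the finitely combinatorial category $[\ct^\ast,\cv]$ shows that every cofibrant weight is weakly equivalent to one in $\Phi^\diamond$; hence \ref{th3.3} applies to $\ct^\ast$ and $(U_{\ct^\ast},F_{\ct^\ast})$ is a Quillen equivalence. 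In the square of \ref{re4.3}, $U_1$ is an equivalence of categories, $U_{\ct^\ast}$ and $U_\ct$ preserve all weak equivalences, and $U_2$ preserves fibrant objects (\ref{le4.4}); evaluating the identity $U_\ct U_1=U_2U_{\ct^\ast}$ on fibrant $\ch^\ast$-models yields a natural isomorphism $\mathbb{R}U_\ct\circ\mathbb{R}U_1\cong\mathbb{R}U_2\circ\mathbb{R}U_{\ct^\ast}$ on homotopy categories. As $\mathbb{R}U_1$ and $\mathbb{R}U_{\ct^\ast}$ are equivalences, $\mathbb{R}U_\ct$ is an equivalence iff $\mathbb{R}U_2$ is; this gives (iii)$\Rightarrow$(i) (indeed (i)$\Leftrightarrow$(iii)).

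For (i)$\Rightarrow$(ii) I would argue that a Quillen equivalence makes $\mathbb{R}U_\ct$ essentially surjective, so each homotopy model $M$ becomes isomorphic in $\Ho(\cm_\ch)$ to the image of a strict model. Because $\cm_\ch$ is a left Bousfield localization of $[\ct,\cv]$, its homotopy category is the full subcategory of $\Ho([\ct,\cv])$ on the local (fibrant homotopy-model) objects; hence an isomorphism in $\Ho(\cm_\ch)$ between two homotopy models is already an isomorphism in $\Ho([\ct,\cv])$, i.e. a zig-zag of weak equivalences in $[\ct,\cv]$. This is precisely (ii).

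For (ii)$\Rightarrow$(iv), let $A$ be a fibrant homotopy model. By (ii) there is a zig-zag of pointwise weak equivalences from $A$ to a strict model $A_0$. By \ref{def4.1} (equivalently, the equivalence $U_1$ of \ref{re4.3}) there is a $\ct^\ast$-model $A_0^\ast$ with $A_0^\ast E\cong A_0$. Replacing $A_0^\ast$ by a fibrant model in $\Mod(\ch^\ast)$, using the lifted fibrant approximation from the proof of \ref{th2.3}, produces a fibrant homotopy $\ch^\ast$-model $B$ together with a pointwise weak equivalence $A_0^\ast E\to BE=U_2B$. Since restriction along $E$ preserves pointwise weak equivalences, composing gives a zig-zag of weak equivalences in $[\ct,\cv]$ between $A$ and $U_2B$, which is (iv).

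Finally, (iv)$\Rightarrow$(iii) closes the cycle and is the main obstacle. By \cite{Ho} 1.3.16 it suffices to show that $U_2$ reflects weak equivalences between fibrant objects and that the derived unit of $(F_2,U_2)$ is a weak equivalence on cofibrant objects. The first holds unconditionally: if $f$ is a map of fibrant homotopy $\ch^\ast$-models with $U_2f=fE$ a pointwise weak equivalence, then, since every object of $\ct^\ast$ is a finitely presentable cofibrant weighted limit of objects $EX$ (\ref{def4.1}) and the endpoints of $f$ send such limits to homotopy limits, $f$ is itself a pointwise weak equivalence. For the derived unit, $E$ is a full embedding (\ref{le4.2}), so the underlying unit $\Id\to U_2F_2$ is an isomorphism and the derived unit at a cofibrant $A$ reduces to $U_2$ applied to a fibrant replacement of $F_2A$ in $\cm_{\ch^\ast}$; a fibrant-replacement and two-out-of-three reduction (using that $F_2$ is left Quillen, \ref{le4.4}) brings us to a bifibrant homotopy model $\hat A$, for which the source $\hat A$ and the target $U_2RF_2\hat A$ are both fibrant homotopy $\ch$-models and the derived unit is thus a weak equivalence in $\cm_\ch$ iff it is a pointwise one. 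Here (iv) supplies a fibrant homotopy $\ch^\ast$-model $B$ with $\hat A\simeq U_2B$, and one compares $RF_2\hat A$ with $B$ by means of the explicit right adjoint $S$ of \ref{re4.5} together with \ref{le4.6}. The crux is exactly this comparison: that a fibrant homotopy $\ch^\ast$-model is determined up to weak equivalence by its restriction along $E$, so that essential surjectivity (iv) forces the derived counit $\mathbb{L}F_2\,\mathbb{R}U_2\to\Id$ to be invertible and hence upgrades (iv) to the Quillen equivalence (iii).
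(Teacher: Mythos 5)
Your overall architecture matches the paper's: (i)$\Leftrightarrow$(iii) via the commuting square of right Quillen functors together with the unconditional Quillen equivalence for $\ct^\ast$ (Theorem \ref{th3.4}), then (i)$\Rightarrow$(ii)$\Rightarrow$(iv)$\Rightarrow$(iii). The steps (i)$\Leftrightarrow$(iii), (i)$\Rightarrow$(ii) and (ii)$\Rightarrow$(iv) are essentially right, modulo one point you gloss over in (i)$\Rightarrow$(ii): to pass from an isomorphism in $\Ho(\cm_\ch)$ to a zig-zag of weak equivalences in $[\ct,\cv]$ you need both ends to be \emph{local} (fibrant homotopy models), and an arbitrary homotopy model $M$ need not be fibrant; the fix is to replace $M$ by $RM$, using that $R$ preserves finite weighted limits so that $RM$ is again a homotopy model while $\rho_M$ is a pointwise weak equivalence. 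This is exactly what the paper does before invoking the derived unit.

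The genuine gap is in (iv)$\Rightarrow$(iii). You correctly reduce to showing that $U_2$ reflects weak equivalences between fibrant objects (your argument for this agrees with the paper's) and that the derived unit $U_2\rho_{F_2X}$ is a weak equivalence for cofibrant $X$, but at the decisive moment you write that ``one compares $RF_2\hat A$ with $B$'' and that ``the crux is exactly this comparison'' --- i.e., you restate the problem rather than solve it. The difficulty is that (iv) only gives a zig-zag of weak equivalences in $[\ct,\cv]$ between $\hat A$ and $U_2B$; it does not hand you a comparison map $RF_2\hat A\to B$ in $[\ct^\ast,\cv]$, and trying to manufacture one by adjunction runs in a circle (you would need to already know that the resulting map is a local weak equivalence, which is what reflection of weak equivalences would then be applied to). The paper's actual argument is different and is the real content of the proof: using (iv) it first shows that $U_2\varphi_l$ is a weak equivalence in $\cm_\ch$ for every generating morphism $\varphi_l$ of the localization $\cm_{\ch^\ast}$, by a mapping-space computation against an arbitrary fibrant $Z_1$ of $\cm_\ch$, which (iv) allows one to replace up to homotopy equivalence by some $U_2Q^\ast Z$ with $Z$ fibrant in $\cm_{\ch^\ast}$ (here Lemma \ref{le4.6}, that $U_2$ preserves cofibrations, is used to get bifibrancy). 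It then factors $\varphi_l$ as a cofibration $\varphi^1_l$ followed by a trivial fibration, so that $U_2\varphi^1_l$ is a trivial cofibration in $\cm_\ch$, and finally exploits the cellular presentation of the fibrant replacement $\rho_{F_2X}$ in the enriched Bousfield localization in terms of the horns $\varphi^1_l\boxdot i$ and trivial cofibrations, together with the fact that $U_2$ preserves colimits and cofibrations, to conclude that $U_2\rho_{F_2X}$ is a weak equivalence. None of this is in your proposal, and without it the implication (iv)$\Rightarrow$(iii) is not established.
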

\begin{proof}
Since the square from \ref{re4.3} (2) consists of right Quillen functors, $U_1$ is an equivalence and $U_{\ct^\ast}$ is a Quillen equivalence 
(see \ref{th3.4}), \cite{Ho} 1.3.15 implies that $(i)\Leftrightarrow (iii)$.
  
$(i)\Rightarrow (ii)$:  Let $A$ be a homotopy model of $\ch$. Since $R$ preserves finite weighted limits, $RA$ is a fibrant homotopy model of $\ch$.
Moreover, the natural transformation $\rho_A:A\to RA$ is a pointwise trivial cofibration and thus a weak equivalence in $[\ct,\cv]$. Now, we take 
a cofibrant replacement $\gamma_{RA}:QRA\to RA$ of $RA$ in $[\ct,\cv]$. Consider a commutative square 
$$
\xymatrix@C=4pc@R=3pc{
QRA\{G_l,D_l\} \ar[r]^{t^{QRA}_l} \ar[d]_{\gamma_{RA\{G_l,D_l\}}} &
\{G_l,QRAD_l\}\ar [d]^{\{G_l,\gamma_{RAD_l}\}}\\
RA\{G_l,D_l\} \ar[r]_{t^{RA}_l}& \{G_l,RAD_l\}
}
$$
Since $\gamma_{RAD_l}$ is a weak equivalence between fibrant objects in $[\ct,\cv]$, the right vertical morphism $\{G_l,\gamma_{RAD_l}\}$
is a weak equivalence (analogously as in \cite{H} 18.4.4). Hence $t^{QRA}_l$ is a weak equivalence and thus $QRA$ is a homotopy model
of $\ch$. Since $\gamma_{RA}$ is a trivial fibration between fibrant objects in $\cm_\ch$, it is a weak equivalence in $[\ct,\cv]$. Thus $A$ 
is weakly equivalent in $[\ct,\cv]$ to the homotopy model $QRA$ of $\ch$ which is fibrant and cofibrant in $[\ct,\cv]$. Therefore, it suffices
to prove $(ii)$ for any homotopy model $A$ of $\ch$ which is fibrant and cofibrant in $[\ct,\cv]$.
Following \cite{Ho} 1.3.13, we have a weak equivalence 
$$
\overline{\eta}_A: A \xrightarrow{\eta_A} U_{\ct}F_{\ct}A \xrightarrow{U_{\ct}\rho_{F_{\ct}A}} U_{\ct}RF_{\ct}A
$$
Since $A$ is fibrant in $\cm_\ch$ (see \ref{th2.7}), $\overline{\eta}_A$ is a weak equivalence between fibrant objects in $\cm_\ch$. Thus 
$\overline{\eta}_A$ is a weak equivalence in $[\ct,\cv]$ (see \cite{H} 3.2.13). Since $RF_{\ct}A$ is a model of $\ch$, $(ii)$ is proved. 
 
$(iv)\Rightarrow(iii)$: At first, we will show that $U_2$ reflects weak equivalences between fibrant objects. Let $f:A\to B$ be a morphism between 
fibrant objects in $\cm_{\ch^\ast}$ such that $U_2f$ is a weak equivalence in $\cm_\ch$. Since $U_2f$ is a weak equivalence between fibrant objects, 
it is a weak equivalence in $[\ct,\cv]$. Consider an object $\{G,ED\}\in\ct^\ast$ which does not belong to the image of $E$. Then $fED$ is a pointwise 
weak equivalence between fibrant objects. Since $G$ is cofibrant, $\{G,EDf\}$ is a weak equivalence (cf. \cite{H} 18.4.4). Consequently, $f$ is a weak 
equivalence.

Since $E$ is a full embedding (see \ref{le4.2}), the adjunction units $\eta_X:X\to G_2F_2X$ are isomorphisms. Following \cite{Ho} 1.3.16, $(U_2,F_2)$ 
is a Quillen equivalence if and only if $U_2\rho_{F_2X}$ is a weak equivalence for each cofibrant object $X$. We will prove at first that $U_2\varphi_l$ 
is a weak equivalence for each $\varphi_l$ from $\ch^\ast$. We know that $[\ct^\ast,\cv](\varphi_l,Z)$ is a weak equivalence in $\cv$ for each fibrant 
object $Z$ of $\cm_{\ch^\ast}$. Let $Z_1$ be a fibrant object in $\cm_\ch$. Assuming $(iv)$, there is a fibrant object $Z$ in $\cm_{\ch^\ast}$ such
that $QZ_1$ is weakly equivalent in $[\ct,\cv]$ to $U_2Z$. Consider the cofibrant replacement $\gamma^\ast_Z:Q^\ast Z\to Z$ of $Z$. Since $\gamma^\ast_Z$
is a trivial fibration in $\cm_{\ch^\ast}$, it is a weak equivalence in $[\ct,\cv]$ and $Q^\ast Z$ is a fibrant homotopy model of $\ch^\ast$ because
it is a fibrant object in $\cm_{\ch^\ast}$. Thus $QZ_1$ is weakly equivalent to $U_2Q^\ast Z$ in $[\ct,\cv]$. Since $U_2$ preserves cofibrations 
(see \ref{le4.6}), both $QZ_1$ and $U_2Q^\ast Z$ are fibrant and cofibrant in $[\ct,\cv]$. Thus they are homotopy equivalent in $[\ct,\cv]$. Thus
there exists a homotopy equivalence $h:U_2Q^\ast Z\to QZ_1$. The composition $g=\gamma_{Z_1}h:U_2Q^\ast Z\to Z_1$ is a weak equivalence between
fibrant objects in $[\ct,\cv]$. Consider the commutative square 
$$
\xymatrix@C=7pc@R=4pc{
[\ct,\cv](\cod U_2\varphi_l,U_2Q^\ast Z) \ar[r]^{[\ct,\cv](U_2\varphi_l,U_2Q^\ast Z)} \ar[d]_{[\ct,\cv](\cod U_2\varphi_l,g)} &
[\ct,\cv](\dom U_2\varphi_l,U_2Q^\ast Z) \ar [d]^{[\ct,\cv](\dom U_2\varphi_l,g)}\\
[\ct,\cv](\cod U_2\varphi_l,Z_1)  \ar[r]_{[\ct,\cv](U_2\varphi_l,Z_1)}& [\ct,\cv](\dom U_2\varphi_l,Z_1)  
}
$$
Since $\varphi_l$ has the domain and the codomain cofibrant and $U_2$ preserves cofibrations, the vertical morphisms are weak equivalences
because $g$ is a weak equivalence between fibrant objects. For the same reason, the upper horizontal morphism is a weak equivalence. Thus
$[\ct,\cv](U_2\varphi_l,Z_1)$ is a weak equivalence. Since $U_2\varphi_l$ has the domain and the codomain cofibrant, it is a weak equivalence 
in $\cm_\ch$.  

Take the (cofibration, trivial fibration) factorization  
$$
\varphi_l:A_l \xrightarrow{\quad  \varphi^l_l\quad} C_l \xrightarrow{\quad \varphi^2_l \quad} B_l.
$$
Since $U_2$ preserves projective weak equivalences and cofibrations and $U_2\varphi_l$ is a weak equivalence, $U_2\varphi^1_l$ is a trivial cofibration.
Following the properties of enriched left Bousfield localizations (see \cite{H} and \cite{B}), a fibrant object $Z$ in $[\ct^\ast,\cv]$ is fibrant
in $\cm_{\ch^\ast}$ if and only if it is injective to horns $\varphi^1_l\boxdot i$ where $i:V\to V'$ is a generating cofibration in $\cv$. Recall 
that $\varphi^1_l\boxdot i:K\to V'\cdot C_l$ is the pushout corner morphism for the pushout
$$
\xymatrix@C=4pc@R=3pc{
V\cdot A_l \ar[r]^{V\cdot\varphi^1_l} \ar[d]_{i\cdot A_l} &
V\cdot C_l\ar [d]^{}\\
V'\cdot A_l \ar[r]_{}& K
}
$$
Thus $\rho_{F_2X}$ is cofibrantly generated by these horns and trivial cofibrations in $[\ct^\ast,\cv]$. Since horns are trivial cofibrations 
in $\cm_{\ch^\ast}$ and $U_2$ preserves colimits and cofibrations, $U_2(\varphi^1_l\boxdot i)$ is a trivial cofibration. Thus $U_2\rho_{F_2X}$ 
is a weak equivalence.
 
$(ii)\Rightarrow(iv)$: Let $A$ be a fibrant homotopy model of $\ch$. Following $(ii)$, $A$ is weakly equivalent in $[\ct,\cv]$ to 
$U_\ct B_1=U_\ct U_1B_2$. Like at the beginning of the proof of $(i)\Rightarrow (ii)$,  $B_2$ is weakly equivalent in $[\ct^\ast,\cv]$
to the fibrant $\ch^\ast$-model $R^\ast B_2$. Thus $A$ is weakly equivalent in $[\ct,\cv]$ to $U_2U_{\ct^\ast}R^\ast B_2$ 
and $U_{\ct^\ast}R^\ast B_2$ is a fibrant homotopy model of $\ch^\ast$.
\end{proof} 

\begin{rem}\label{re5.2}
{
\em
(1) In the proof above (the implication $(i)\Rightarrow (ii)$), we showed that any fibrant $\cv$-functor $\ct\to\cv$ weakly equivalent
in $[\ct,\cv]$ to a fibrant homotopy model of $\ch$ is a homotopy model of $\ch$. In order that homotopy models of $\ch$ are closed
under weak equivalences in $[\ct,\cv]$, we need that $\{G_l,-\}$ preserves weak equivalences for each $l\in L$.

(2) We also showed that, assuming (iv), $U_2\varphi_l$ is a weak equivalence in $\cm_\ch$. Thus $[\ct,\cv](U_2\varphi_l,A)$
is a weak equivalence for each fibrant homotopy model $A$ of $\ch$. Hence $[\ct^\ast,\cv](\varphi_l,S(A))$ is a weak equivalence. Therefore
$S(A)$ is a fibrant homotopy model of $\ch^\ast$. Since $U_2S(A)=A$, (iv) is equivalent to

(v) $S$ preserves fibrant objects.

(3) Without any change of the proof, \ref{th5.1} is valid for any class $\Phi$ of finitely presentable cofibrant weights from \ref{th3.3}.
In particular we can take $\Phi^\ast$ from \ref{re3.5}. In this case, we get a characterization when any homotopy algebra of a normal finite
product sketch is equivalent to a strict algebra. The following example shows that this is not always true and indicates that this fact
is a kind of a coherence theorem.
}
\end{rem} 
 
\begin{exam}\label{ex5.3}
{
\em
Let $\ch=(\ct,L)$ be a normal finite product sketch for monoids. This means that $\ct$ contains objects $X_0,X_1,X_2,X_3$ where $X_0$ is terminal,
$X_2=X_1\times X_1$ and $X_3=X_1\times X_1\times X_1$. Moreover, $\ct$ contains morphisms $e:X_0\to X_1$ and $m:X_2\to X_1$ playing the role
of unit and multiplication subjected to the axioms for unit and the associativity axiom. Models of $\ch$ in $\Cat$ are precisely strict monoidal
categories. Assume that each homotopy model of $\ch$ is weakly equivalent in $[\ct,\Cat]$ to a model of $\ch$. Since equivalences of categories
are closed under finite products, \ref{re5.2} (1) implies that homotopy models are precisely functors $\ct\to\Cat$ equivalent to models of $\ch$.
This means that any homotopy model is equivalent by a strong monoidal functor to a strict monoidal category. Hence homotopy models of $\ch$ 
are precisely monoidal categories (see \cite{ML} XI.1-3.). But this is not possible because the pentagon axiom does not follow from the associativity 
up to a natural isomorphism. 

On the other hand, by adding an object $X_4= X_1^4$ to $\ct$, each homotopy model of the enlarged sketch is weakly equivalent to a model of $\ch$.
Of course, this example leans on \cite{Bo} as mentioned in \ref{re3.5}.
}
\end{exam}

\begin{exam}\label{ex5.4}
{
\em
(1) Recall that a \textit{Segal category} is a bisimplicial set $A:\boldsymbol{\Delta}^{\op}\to\SSet$ such that $A_0$ is a discrete
simplicial set and the Segal maps 
$$
s_k:A_k\to A_1\times_{A_0}\dots\times_{A_0} A_1
$$
(on the right side we have a limit of $k$ copies of $A_1$ over $A_0$) are weak equivalences for each $k\geq 2$ (see \cite{HS}). 
Thus Segal categories are precisely homotopy models $A$ of the finite limit theory of categories in $\SSet$ such that $A_0$
is discrete. When we fix the set $A_0$, this finite limit theory turns into an algebraic theory and thus each Segal category is equivalent
to a simplicial category, i.e., to a category enriched over $\SSet$ (see \cite{Be1}).

We can also sketch discreteness by forcing $A_0$ to be the cotensor $\Delta_1\pitchfork A_0$. Models of the resulting finite normal weighted limit 
sketch $\ch$ are precisely simplicial categories. This sketch is not a weighted homotopy limit sketch because we use limits (multiple pullbacks) 
and not homotopy limits. Let $\ch'$ be a weighted homotopy limit sketch associated to $\ch$. This means that each weight $G_l$ is substituted 
by its cofibrant replacement $G'_l$. Following \cite{H} 18.1.8, given a diagram $D:\cd\to\boldsymbol{\Delta}^{\op}$ this replacement is 
$B(\cd\downarrow -)$. For a multiple pullback diagram, this weight is finitely presentable. Since the weight for the cotensor
$\Delta_1\pitchfork A_0$ is $\Delta_1$ (as the constant functor from a single morphism category to $\SSet$), it is cofibrant. Homotopy models
$A$ of $\ch'$ can be called weak Segal categories because discreteness of $A_0$ is replaced by $A_0\to\Delta_1\pitchfork A_0$ being a weak 
equivalence. This means that $A_0$ is homotopy discrete, i.e., a coproduct of contractible simplicial sets. 

Now, let $A$ be a fibrant Segal category. Since $A_1$ is fibrant and $A_0$ is discrete, morphisms $A_1\to A_0$ are fibrations. Since homotopy pullbacks 
are isomorphic with pullbacks in this case, $A$ is a homotopy model of $\ch'$. Since $\Ex^\infty$ preserves discrete simplicial sets (see \cite{GJ} 
4.2), any Segal category $A$ is weakly equivalent to a fibrant Segal category $\Ex^\infty A$. Hence $A$ is weakly equivalent to a simplicial 
category. But I do not know whether each homotopy model of $\ch'$ is weakly equivalent to a Segal category. 

For sketching simplicial categories, we need only the Segal maps $s_k$ for $k\leq 3$. Since such a truncation does not seem to be possible
for Segal categories, we get another example of a normal finite product sketch without a rigidification. 

(2) Similarly, we can treat Tamsamani 2-categories which correspond to Segal categories when we replace $\SSet$ by $\Cat$. In the same way
as in (1), \cite{Bo} implies that any Tamsamani 2-category is equivalent to a 2-category.
}
\end{exam} 

\begin{defi}\label{def5.5}
{\em
Let $\ch=(\ct,L)$ be a weighted homotopy limit sketch. We say that a $\cv$-functor $A:\ct\to\cv$ is an \textit{easy homotopy model} of $\ch$ 
if the morphisms 
$$
t_l^A:AX_l\to\{G,AD_l\}
$$
are trivial fibrations. 
}
\end{defi}

We are following the terminology of \cite{Si}.

\begin{lemma}\label{le5.6}
$S$ preserves easy homotopy models.
\end{lemma}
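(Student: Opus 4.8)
The plan is to transpose the statement through the adjunction $U_2\dashv S$ of \ref{re4.5}, where $U_2=[E,\cv]$. The functor $U_2$ is given by precomposition with $E$, hence is cocontinuous and preserves copowers, so it preserves pushout--products: $U_2(\psi\boxdot i)=(U_2\psi)\boxdot i$ for every $\psi$ in $[\ct^\ast,\cv]$ and every $i$ in $\cv$. Recalling from the proof of \ref{th2.7} that $t_l^A\cong[\ct,\cv](\varphi_l,A)$, a functor $A$ is an easy homotopy model of $\ch$ precisely when it has the right lifting property against the horns $\varphi_l\boxdot i$, $i$ a generating cofibration of $\cv$; likewise $S(A)$ is an easy homotopy model of $\ch^\ast$ precisely when it lifts against the horns $\varphi_{(G,D)}\boxdot i$ attached to the weighted limit cones of $\ct^\ast$. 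By adjunction, $S(A)$ lifts against $\varphi_{(G,D)}\boxdot i$ if and only if $A$ lifts against $U_2(\varphi_{(G,D)}\boxdot i)=(U_2\varphi_{(G,D)})\boxdot i$. So the task is to understand $U_2\varphi_{(G,D)}$.

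Using the explicit form $S=\widetilde{(-)}\,\overline J$ and that $\ct^\ast$ is coreflective in $\tilde\ct$ along $\overline J\dashv\overline P$, weighted limits in $\ct^\ast$ are $\overline P$ of those in $\tilde\ct$, and a direct computation identifies $U_2\varphi_{(G,D)}$ with the reflection comparison
$$
\lambda_\Omega:\Omega\longrightarrow R\Omega ,
$$
where $\Omega=G\ast\ct^\ast(D,E-)$ is a finitely presentable cofibrant presheaf (a weighted colimit of the representable-like models $\ct^\ast(Dd,E-)$) and $R$ is the reflection of $[\ct,\cv]_{\cfp}$ onto models, with unit $\lambda$. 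The class of morphisms against which $A$ has the right lifting property contains the $\varphi_l\boxdot i$, contains all isomorphisms, and is closed under pushout, transfinite composition and retract; moreover, since every object of $\cv$ is cofibrant, cotensoring a trivial fibration by an arbitrary object is again a trivial fibration, so this class is also closed under copowers $\psi\mapsto V\cdot\psi$. It therefore suffices to present $\lambda_\Omega$ as a cell complex built from the cones $\varphi_l$, so that $\lambda_\Omega\boxdot i$ falls into the saturation of $\{\varphi_l\boxdot i'\}$.

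The main obstacle is exactly this cellular presentation of the strict reflection. The ``existence'' cells of $\lambda_\Omega$ are copowers and pushout--products of the $\varphi_l$, and $[\ct,\cv](-,A)$ sends these to cotensors and pullback--homs of the trivial fibrations $t_l^A$, hence to trivial fibrations; this part is harmless. The danger is the ``uniqueness'' part of an orthogonality reflection, which a priori contributes codiagonal cells built from $\cod\varphi_l\cup_{\dom\varphi_l}\cod\varphi_l\to\cod\varphi_l$: under $[\ct,\cv](-,A)$ such a cell yields a relative diagonal of $t_l^A$, and the relative diagonal of a trivial fibration is in general only a trivial cofibration, not a trivial fibration. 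The heart of the proof is to show that for our $\Omega$ no such cells are needed --- I would use that $\Omega$ is a cofibrant weighted colimit of objects that are \emph{already} models and that the weight $G$ is cofibrant to run the reflection tower using only copowers and pushout--products of the $\varphi_l$ with generating cofibrations of $\cv$, so that $\lambda_\Omega$ is a relative $\{\varphi_l\boxdot i'\}$--cell complex. Granting this, the closure properties above show that $A$ lifts against $\lambda_\Omega\boxdot i$, whence each $t^{S(A)}_{(G,D)}$ is a trivial fibration and $S(A)$ is an easy homotopy model of $\ch^\ast$.
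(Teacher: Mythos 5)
Your reduction is sound and is, in substance, the adjoint transpose of the paper's own argument: the paper computes $t^{S(A)}_{(G,D)}=\tilde{A}\varepsilon_{\{G,\overline{J}D\}}$, where $\varepsilon$ is the counit of the coreflection $\overline{J}\dashv\overline{P}$ from \ref{re4.5}, and concludes because $\varepsilon$ is ``fibrantly generated'' by the $\varphi_l$ while $\tilde{A}$ preserves limits and sends each $\varphi_l$ to the trivial fibration $t^A_l$; you instead transpose along $U_2\dashv S$ and ask that the reflection unit $\lambda_\Omega$ be cofibrantly generated by the $\varphi_l$. These are the same statement seen from the two sides of the adjunction, and your preliminary steps (the identification of ``easy homotopy model'' with the right lifting property against the horns $\varphi_l\boxdot i$, the compatibility of $U_2$ with pushout--products, the closure properties of the class of $\psi$ with $[\ct,\cv](\psi,A)$ a trivial fibration) are all correct.

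The proposal is nevertheless incomplete: the step you yourself call ``the heart of the proof'' --- that $\lambda_\Omega$ is a relative $\{\varphi_l\boxdot i'\}$--cell complex, i.e.\ that the reflection onto the $\varphi_l$--orthogonal objects can be run without codiagonal (uniqueness) cells --- is only announced (``I would use\dots'', ``Granting this\dots''), not proved. You have correctly isolated the one genuinely delicate point: the relative diagonal of a trivial fibration need not be a trivial fibration, so if the reflection tower required cells of the form $\cod\varphi_l\cup_{\dom\varphi_l}\cod\varphi_l\to\cod\varphi_l$ the argument would collapse, and everything reduces to excluding them. The paper handles this in a single sentence, asserting that $\varepsilon_X$ is fibrantly generated by the $\varphi_l$ alone and relying implicitly on the construction of the reflector $P$ as a fibrant replacement in the localized trivial model structure of \ref{le4.2}. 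A blind proof must actually supply this content: either reproduce that construction and verify that the cells occurring are copowers and pushout--products of the $\varphi_l$ with cofibrations of $\cv$ (so that $[\ct,\cv](-,A)$ converts them into cotensors and pullback--homs of the trivial fibrations $t^A_l$), or carry out the inductive argument you sketch for the particular presheaves $\Omega=G\ast\ct^\ast(D,E-)$. As it stands, the proposal is a correct reduction together with an unproved central claim, so it does not yet constitute a proof of the lemma.
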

\begin{proof}
Let $A$ be an easy homotopy model of $\ch$. We have to show that
$$
t^{S(A)}:S(A)\{G,D\}\to\{G,S(A)D\}
$$
is a trivial fibration for each $G:\cd\to\cv$ and $D:\cd\to\ct^\ast$. Following \ref{re4.5}, 
$$
t^{S(A)}:\tilde{A}\overline{J}\{G,D\}\to\{G,\tilde{A}\overline{J}D\}\cong\tilde{A}\{G,\overline{J}D\}.
$$
Since the weighted limit $\{G,D\}$ in the coreflective full subcategory $\ct^\ast$ of $\tilde{\ct}$ is calculated as the coreflection
of the weighted limit $\{G,D\}$ in $\tilde{\ct}$, 
$$
t^{S(A)}=\tilde{A}\varepsilon_{\{G,\overline{J}D\}}
$$
where $\varepsilon:\overline{J}\overline{P}\to\Id$ is the counit. Since $\varepsilon_X$ is fibrantly generated by $\varphi_l$ for $l\in L$
and $\tilde{A}$ preserves all limits, $\tilde{A}_{\{G,D\}}$ is fibrantly generated by $\tilde{A}\varphi_l$, $l\in L$. Since 
$\tilde{A}\varphi_l=t^A_l$, $\tilde{A}_{\{G,D\}}$ is a trivial fibration. Thus $S(A)$ is an easy homotopy model of $\ch^\ast$.
\end{proof}

\begin{coro}\label{cor5.7} 
Let $\cv$ be a finitely combinatorial monoidal model ca\-te\-go\-ry having all objects cofibrant and equipped with a fibrant approximation $\cv$-functor 
$R:\cv\to\cv$ preserving finite weighted limits. Let $\ch=(\ct,L)$ a normal finite weighted homotopy limit sketch. Then each easy homotopy model 
of $\ch$ is weakly equivalent in $[\ct,\cv]$ to a model of $\ch$.
\end{coro}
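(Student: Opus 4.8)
The plan is to reduce the rigidification problem for $\ch$ to the corresponding problem for the conservative free completion $\ch^\ast$, where it holds automatically because $\ct^\ast$ is a genuine theory, and then to transport the result back along $U_2$. First I would take an easy homotopy model $A$ of $\ch$ and apply \ref{le5.6} to conclude that $S(A)$ is an easy homotopy model of $\ch^\ast$; since trivial fibrations are weak equivalences, $S(A)$ is in particular a homotopy model of $\ch^\ast$.

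Next I would invoke rigidification on the completion. Because all objects of $\cv$ are cofibrant, $\ct^\ast$ is locally cofibrant, and because $R$ preserves finite weighted limits it also preserves limits weighted by finitely presentable cofibrant weights (finitely presentable weights being finite weighted colimits of representables, cf.\ the proof of \ref{th2.3}). Hence \ref{th3.3} applies to $\ct^\ast$ with $\Phi$ the class of all finitely presentable cofibrant weights, the hypothesis that every cofibrant weight be weakly equivalent to a weight in $\Phi^\diamond$ holding exactly as in the proof of \ref{th3.4} via the directed-colimit presentation of cofibrant objects. Thus $(U_{\ct^\ast},F_{\ct^\ast})$ is a Quillen equivalence, and applying \ref{th5.1}, $(i)\Rightarrow(ii)$, to the normal finite weighted homotopy limit sketch $\ch^\ast$, every homotopy model of $\ch^\ast$ is weakly equivalent in $[\ct^\ast,\cv]$ to a model of $\ch^\ast$. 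In particular there is a zig-zag of weak equivalences in $[\ct^\ast,\cv]$ between $S(A)$ and a strict model $M$ of $\ch^\ast$.

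Finally I would push this zig-zag down along $U_2=[E,\cv]$. Since $U_2$ is precomposition with $E$, it sends objectwise weak equivalences to objectwise weak equivalences, so the zig-zag yields a zig-zag of weak equivalences in $[\ct,\cv]$ between $U_2S(A)$ and $U_2M$. By \ref{re5.2}(2) we have $U_2S(A)\cong A$, so it remains to identify $U_2M=ME$ as a model of $\ch$. Here I would use normality: writing $E=\overline{P}\,\overline{Y}$ as in \ref{re4.5}, the embedding $\overline{Y}$ preserves the existing weighted limits $X_l=\{G_l,D_l\}$ (see the proof of \ref{le4.2}), the coreflector $\overline{P}$ preserves limits, and weighted limits in $\ct^\ast$ are computed by coreflecting those of $\tilde{\ct}$ (see the proof of \ref{le5.6}); together with $\overline{J}\,\overline{P}\,\overline{Y}\cong\overline{Y}$ this gives $EX_l\cong\{G_l,ED_l\}$ in $\ct^\ast$. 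As $M$ is a model of $\ch^\ast$ it preserves this finitely presentable cofibrant weighted limit, whence $t_l^{ME}$ is an isomorphism and $ME$ is a model of $\ch$. Therefore $A$ is weakly equivalent in $[\ct,\cv]$ to the model $ME$ of $\ch$.

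I expect the main obstacle to be this last step, namely confirming that $ME$ is a \emph{strict} model of $\ch$, i.e.\ that $E$ preserves the defining weighted limits $\{G_l,D_l\}$. This is exactly where the normality of $\ch$ is indispensable and where the explicit description of $E$ and of weighted limits in $\ct^\ast$ from Section 4 must be used with care; by contrast, the reduction to $\ct^\ast$ and the transport along $U_2$ are comparatively formal.
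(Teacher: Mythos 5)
Your proposal is correct and follows essentially the same route as the paper: apply Lemma \ref{le5.6} to get that $S(A)$ is a homotopy model of $\ch^\ast$, rigidify over the completion via Theorems \ref{th3.4}/\ref{th3.3} and \ref{th5.1}, and transport back along precomposition with $E$ using $U_2S(A)\cong A$. The only difference is that the paper absorbs your ``main obstacle'' (that $ME$ is a strict model of $\ch$) into the already-established fact that $U_1$ is an equivalence $\Mod(\ch^\ast)\to\Mod(\ch)$ from \ref{re4.3}(2), whereas you rederive it from normality and the description of $E$; both are fine.
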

\begin{proof}
Let $A$ be an easy homotopy model of $\ch$. Following \ref{le5.6}, $S(A)$ is a homotopy model of $\ch^\ast$. Following \ref{th3.4} and \ref{th5.1},
$S(A)$ is weakly equivalent to a model $B$ of $\ch^\ast$. Thus $A$ is weakly equivalent to the model $U_1B$ of $\ch$. 
\end{proof}

\begin{rem}\label{re5.8}
{
\em
A consequence of \ref{cor5.7} is that a normal finite weighted homotopy limit sketch $\ch$ admits a rigidification if and only if it admits
an \textit{easyfication}, i.e., if and only if each homotopy model of $\ch$ is weakly equivalent to an easy homotopy model of $\ch$.
The same is true in a more general context of \ref{th3.3}.
}
\end{rem}


\begin{thebibliography}{AHRT}
\itemsep=2pt
 
\bibitem{AR} J. Ad\'{a}mek and J. Rosick\'{y}, {\em Locally Presentable and Accessible Categories}, Cambridge University
Press 1994.

\bibitem{Ba} B. Badzioch, {\em Algebraic theories in homotopy theory}, Ann. Math. 155 (2002), 895-913.

\bibitem{B} C. Barwick, {\em On left and right model categories and left and right Bousfield localizations}, Homology, Homotopy Appl. 1 (2010),
1-76.

\bibitem{Be} J. E. Bergner, {\em Rigidification of algebras over multi-sorted theories}, Alg. Geom. Topology 6 (2005), 1925-1955.
 
\bibitem{Be1} J. E. Bergner, {\em Simplicial monoids and Segal categories}, Contemp. Math. 431 (2007), 59-83.

\bibitem {Bo} J. Bourke, {\em A colimit decomposition for homotopy algebras in CAT}, arXiv:1206.1203.

\bibitem{DRO} B. I. Dundas, O. R\" ondings and P. A. \O stvaer, {\em Enriched functors and stable homotopy theory}, Doc. Math. 8 (2003),
409-488.

\bibitem{GJ} P. G. Goerss and J. F. Jardine, {\em Simplicial Homotopy theory}, Birkh\" auser 1999.

\bibitem{H} P.S. Hirschhorn, {\em Model categories and Their Localizations}, Amer. Math. Soc. 2003.

\bibitem{HS} A. Hirschowitz and C. Simpson, {\em Descent pour les $n$-champs}, arXiv:math.AG/9807049.

\bibitem{Ho} M. Hovey, {\em Model categories}, Amer. Math. Soc. 1999.

\bibitem{J} P. T. Johnstone, {\em Sketches of an Elefant. A Topos Theory Compendium}, Vol. 1, Oxford Univ. Press 2002.

\bibitem{K} G. M.Kelly, {\em Structure defined by finite limits in the enriched context, I}, Cah. Top. G\' eom. Diff. XXIII
(1982), 3-42.

\bibitem{K1} G. M. Kelly, {\em Basic Concepts of Enriched Category Theory}, Cambridge Univ. Press 1982.

\bibitem{KS} G. M. Kelly and V. Schmitt, Notes on enriched categories with colimits of some class, Th. Appl. Categ. 14 (2005), 399-423.

\bibitem{LR1} S. Lack and J. Rosick\'{y}, {\em Enriched weakness}, J. Pure Appl. Alg. 216 (2012), 1807-1822.

\bibitem{LR} S. Lack and J. Rosick\'{y}, {\em Homotopy locally presentable enriched categories}, preprint 2012.

\bibitem{L} J. Lurie, {\em Higher Topos Theory}, Princeton Univ. Press 2009.

\bibitem{ML} S. Mac Lane, {\em Categories for the Working Mathematician}, Springer 1998.

\bibitem{MP} M. Makkai and R. Par\' e, {\em Accessible Categories: The Foundation of Categorical Model Theory}, AMS 1989.

\bibitem{MRV} M. Makkai, J. Rosick\' y and L. Vok\v r\'\i{}nek, {\em On a fat small object argument}, arXiv:1304.6974.

\bibitem{P} J. Power, {\em Enriched Lawvere theories}, Th. Appl. Cat. 6 (1999), 83-93. 

\bibitem{R} J. Rosick\' y, {\em On combinatorial model categories}, Appl. Cat. Struct. 17 (2009), 303-316.
 
\bibitem{RT} J. Rosick\' y and W. Tholen, {\em Left-determined model categories and universal homotopy theories}, Trans. AMS 355
(2003), 3611-3623.
 
\bibitem{S} S. Schwede, {\em Stable homotopical algebra and $\Gamma$-spaces}, Math. Proc. Cambr. Phil. Soc. 126 (1999), 329-356. 

\bibitem{Si} C. Simpson {\em A closed model structure for $n$-categories, internal $Hom$, $n$-stacks and generalized Seifert-Van Kampen theorem}, 
arXiv:alg-geom/9704006.

\bibitem{Sh} M. Shulman, {\em Homotopy limits and colimits and enriched homotopy theory}, arXiv:math.AT/0610194.

\bibitem{V} L. Vok\v r\'\i{}nek, {\em Homotopy weighted colimits}, arXiv:1201.2970.

\end{thebibliography}
\end{document}